\documentclass[11pt]{article}

\usepackage{color}
\usepackage[dvips]{graphicx}
\usepackage{amsmath}
\usepackage{amsthm}
\usepackage{dsfont}

\theoremstyle{plain}
\newtheorem{theorem}{Theorem}

\newtheorem{lemma}{Lemma}

\newtheorem{proposition}[lemma]{Proposition}

\theoremstyle{definition}

\newtheorem{definition}[lemma]{Definition}

\theoremstyle{remark}

\newtheorem{remark}[lemma]{Remark}

\begin{document}

\title{Piecewise analytic bodies in subsonic potential flow}%

\author{Volker Elling}%
\date{}%
\maketitle%

\begin{abstract}
  We prove that there are no nonzero uniformly subsonic potential flows around bodies with three or more protruding corners,
  for piecewise analytic boundary and for equation of state a $\gamma$-law with $\gamma>1$. 
  This generalizes an earlier result limited to the low-Mach limit for nondegenerate polygons. 
  For incompressible flows we show the velocity cannot be globally bounded. 
\end{abstract}

\newcommand{\isect}{\cap}
\newcommand{\upconv}{\nearrow}
\newcommand{\dnconv}{\searrow}
\newcommand{\topref}[2]{\overset{\text{\eqref{#1}}}{#2}}
\newcommand{\csep}{\quad,\quad}
\newcommand{\eps}{\epsilon}
\newcommand{\pd}[1]{\partial_{#1}}
\newcommand{\closure}[1]{\overline{#1}}
\newcommand{\spC}{\mathcal{C}}
\newcommand{\Ck}[1]{\spC^{#1}}
\newcommand{\Ctwo}{\Ck2}
\newcommand{\Cinf}{\Ck\infty}
\newcommand{\conv}{\rightarrow}
\newcommand{\esssup}{\operatorname{esssup}}
\newcommand{\Leb}{\mathcal L}
\newcommand{\Leba}[1]{\Leb^{#1}}
\newcommand{\Linf}{\Leba\infty}
\newcommand{\set}[1]{\{#1\}}
\newcommand{\selset}[2]{\{#1~:~#2\}}
\newcommand{\setdiff}{\backslash}
\newcommand{\bdry}{\partial}
\newcommand{\R}{\mathds{R}}
\newcommand{\const}{\text{const}}
\newcommand{\vlen}[1]{|#1|}
\newcommand{\trace}{\operatorname{tr}}
\newcommand{\pt}{\partial_t}
\newcommand{\ndiv}{\nabla\dotp}
\newcommand{\dotp}{\cdot}
\newcommand{\crossp}{\times}
\newcommand{\ncurl}{\nabla\crossp}
\newcommand{\hess}{\nabla^2}
\newcommand{\half}{\frac12}

\newcommand{\tensor}{\otimes}

\newcommand{\defm}[1]{\emph{#1}}
\newcommand{\pglau}{\beta} 
\newcommand{\polaeps}{\eps}
\newcommand{\gC}{C}
\newcommand{\Machbound}{\Mach_{\min}}
\newcommand{\ent}{s}
\newcommand{\epm}{e}
\newcommand{\qpm}{q}
\newcommand{\tmpr}{T}

\newcommand{\extRtwo}{\hat\R^2}

\newcommand{\Caa}{C_1}%
\newcommand{\Cab}{C_2}%
\newcommand{\Cac}{C_3}%
\newcommand{\Cad}{C_4}%
\newcommand{\Cae}{C_5}%

\newcommand{\vv}{\vec v}
\newcommand{\vscal}{|\vv|}
\newcommand{\Uci}{S}
\newcommand{\rhsf}{F}
\newcommand{\sstf}{\overline\stf}
\newcommand{\scl}{\eps}
\newcommand{\nconv}{\conv}
\newcommand{\ssndz}{\ssnd_{\max}}
\newcommand{\densz}{\dens_{\max}}
\newcommand{\sstflim}{\sstf}
\newcommand{\xxlim}{\xx\scriptinf}

\newcommand{\chaball}[2]{\overline H_{#1}(#2)}%
\newcommand{\haball}[2]{H_{#1}(#2)}%
\newcommand{\arads}{\varrho}%
\newcommand{\arad}[1]{\arads(#1)}%
\newcommand{\haballc}{\haball{\arad0}{0}}%
\newcommand{\xxd}{\xx_1}%
\newcommand{\aad}{\aa_1}%
\newcommand{\haballd}{\haball{\arad{\aad}}{\aad}}%
\newcommand{\aae}{\aa_2}%
\newcommand{\aaf}{\aa_*}%
\newcommand{\aradf}{|\aa-\aaf|}%
\newcommand{\haballf}{\haball{\aradf}{\aaf}}%

\newcommand{\rmin}{\underline\rad}
\newcommand{\Pola}{\Theta}
\newcommand{\xrad}{\rho}
\newcommand{\morex}{\delta}
\newcommand{\Ig}{J}
\newcommand{\CPW}{C_{PW}}
\newcommand{\plam}{\pd\lam}
\newcommand{\pLam}{\pd\Lam}
\newcommand{\Jlrv}{B^{(\lrv)}}
\newcommand{\Jpola}{B^{(\pola)}}
\newcommand{\maxf}{M_\ff}
\newcommand{\Dir}{\mathcal{D}}

\newcommand{\me}{m_\epsilon}
\newcommand{\Ue}{U_\epsilon}
\newcommand{\Ke}{K_\epsilon}
\newcommand{\We}{W_\epsilon}

\newcommand{\quaf}{\quasic_\ff}
\newcommand{\ff}{\vec f}
\newcommand{\fg}{\vec F}
\newcommand{\fx}{f^x}
\newcommand{\fy}{f^y}
\newcommand{\fxavg}{\overline{\fx}}
\newcommand{\fyavg}{\overline{\fy}}
\newcommand{\ffavg}{\overline{\ff}}
\newcommand{\ffvar}{\tilde\ff}
\newcommand{\fxvar}{\tilde\fx}
\newcommand{\fyvar}{\tilde\fy}

\newcommand{\mineig}{\lambda}
\newcommand{\maxeig}{\Lambda}
\newcommand{\ssig}{\sigma}
\newcommand{\Ber}{B}
\newcommand{\isenc}{\gamma} 
\newcommand{\Ctb}{\Ck3}
\newcommand{\lamlo}{0}
\newcommand{\lamhi}{1}
\newcommand{\polalo}{\pola_0}
\newcommand{\polahi}{\pola_1}
\newcommand{\polaa}{\pola_\lam}
\newcommand{\lrvlo}{\lrv_0}
\newcommand{\lrvhi}{\lrv_1}
\newcommand{\lrvj}{\lrv_j}
\newcommand{\polaj}{\pola_j}
\newcommand{\intp}{\int_{\lamlo}^{\lamhi}}
\newcommand{\intl}{\int_{\lrvlo}^{\lrvhi}}
\newcommand{\aacorner}{\aa_c}
\newcommand{\bighhball}{\cball{R/2}{\aacorner}\isect\haplane}
\newcommand{\lam}{q}
\newcommand{\Lam}{\vec\lam}
\newcommand{\quasic}{K}
\newcommand{\elli}{\kappa}
\newcommand{\Corners}{\set{\text{corners}}}
\newcommand{\Cutoff}{\Bdry_{\rmin}}
\newcommand{\haplane}{\R^2_+}
\newcommand{\habdry}{\bdry\R^2_+}
\newcommand{\ww}{\ff}
\newcommand{\www}{w}
\newcommand{\wwi}{w_\infty}
\newcommand{\wpot}{\Phi}
\newcommand{\prho}{\pd\rho}
\newcommand{\pal}{\pd\varphi}
\newcommand{\subso}{\underline\stf}
\renewcommand{\aa}{\vec a}
\newcommand{\av}{a}
\newcommand{\bv}{b}
\newcommand{\gv}{\vec g}
\newcommand{\varstf}{\tilde\stf}
\newcommand{\vxi}{v^x_\infty}
\newcommand{\stfnew}{\stf^\text{new}}
\newcommand{\varstfnew}{\varstf^\text{new}}
\newcommand{\varvxnew}{\tilde v^{x,\text{new}}}
\newcommand{\varvynew}{\tilde v^{y,\text{new}}}
\newcommand{\vxnew}{v^{x,\text{new}}}
\newcommand{\vynew}{v^{y,\text{new}}}
\newcommand{\varvx}{\tilde v^x}
\newcommand{\varvy}{\tilde v^y}
\newcommand{\Ps}{\set{\stf>0}}
\newcommand{\Zs}{\Dom_0}
\newcommand{\Zl}{Z_-}
\newcommand{\Zr}{Z_+}
\newcommand{\Ns}{\set{\stf<0}}
\newcommand{\cDom}{\closure\Dom}
\newcommand{\cDomi}{\closure\Dom\union\set\infty}
\newcommand{\sDom}{\Dom\union\Slipb}
\newcommand{\sDomi}{\Dom\union\Slipb\union\set\infty}
\newcommand{\Dom}{\Omega}
\newcommand{\Bdry}{\bdry\Dom}
\newcommand{\Slipb}{\Bdry}
\newcommand{\vx}{v^x}
\newcommand{\vy}{v^y}
\newcommand{\lrv}{a}
\newcommand{\Lrv}{\mathbf a}
\newcommand{\cz}{\overline z}
\newcommand{\pcz}{\pd\cz}
\newcommand{\pz}{\pd z}
\newcommand{\Hf}{H}
\newcommand{\Gam}{\Gamma}
\newcommand{\Dt}{D_t}
\newcommand{\vm}{\vec m}
\newcommand{\vmi}{\overline\vm}
\newcommand{\vmq}{|\vec m|^2}
\newcommand{\vms}{\mu}
\newcommand{\hdiv}{\hat\tau}
\newcommand{\vmssonic}{\vms_1}
\newcommand{\Body}{B}
\newcommand{\vmsmax}{\overline\vms}
\newcommand{\gdiv}{\vec a}
\newcommand{\gmod}{\vec{\tilde a}}
\newcommand{\hmod}{\tilde\tau}
\newcommand{\dhmod}{\tilde\tau'}
\newcommand{\hexp}{\zeta}
\newcommand{\dhdiv}{\hat\tau'}
\newcommand{\vmsonic}{m_1}
\newcommand{\vcav}{v_*}
\newcommand{\rhomax}{\rho_0}
\newcommand{\vort}{\omega}
\renewcommand{\Re}{\operatorname{Re}}
\renewcommand{\Im}{\operatorname{Im}}
\newcommand{\zacs}[1]{#1_0}
\newcommand{\zac}[1]{(#1)_0}
\newcommand{\lacs}[1]{\delta#1}
\newcommand{\lac}[1]{\delta(#1)}
\newcommand{\dens}{\varrho}
\newcommand{\vpot}{\phi}
\newcommand{\cpot}{\Phi}
\newcommand{\stf}{\psi}

\newcommand{\piv}{p}
\newcommand{\pif}{\hat\piv}
\newcommand{\ipif}{\pif^{-1}}
\newcommand{\pp}{P}
\newcommand{\ppf}{\hat\pp}

\newcommand{\Lip}{\operatorname{Lip}}
\newcommand{\Cs}{\Ck\sobex}
\newcommand{\sobex}{s}

\newcommand{\xiv}{\lambda}
\newcommand{\leba}[1]{L^{#1}}
\newcommand{\sobs}{\soba\sobex}
\newcommand{\soba}[1]{H^{#1}}
\newcommand{\sobexi}{m}
\newcommand{\sobexf}{\sigma}
\renewcommand{\Lip}{\Ck{0,1}}
\newcommand{\Cm}{\Ck{\sobexi}}
\renewcommand{\Cs}{\Ck{\sobexi,\sobexf}}
\newcommand{\iCk}[1]{\mathring\spC^{#1}}
\newcommand{\iCs}{\iCk{\sobex+2}}
\newcommand{\iCsm}{\iCk{\sobex+1}}
\newcommand{\iCsmm}{\iCk{\sobex}}
\newcommand{\Fmap}{f}
\newcommand{\lFmap}{\dot\Fmap}
\newcommand{\cuti}{\chi_0}
\newcommand{\cuto}{\chi_\infty}

\def\defeq{:=}
\newcommand{\nperp}{\nabla^\perp}
\newcommand{\subeq}[2]{\mathord{\underbrace{\mathop{#1}}_{#2}}}
\newcommand{\supeq}[2]{\mathord{\overbrace{\mathop{#1}}^{#2}}}
\def\eqdef{=:}
\newcommand{\boi}[2]{{]#1,#2[}}
\newcommand{\loi}[2]{{]#1,#2]}}
\newcommand{\roi}[2]{{[#1,#2[}}
\newcommand{\cli}[2]{{[#1,#2]}}
\newcommand{\Lap}{\Delta}

\renewcommand{\vec}[1]{\mathbf{#1}}
\newcommand{\ve}{\vec e}
\newcommand{\vey}{\ve_y}
\newcommand{\vex}{\ve_x}
\newcommand{\rad}{r}
\newcommand{\vn}{\vec n}
\newcommand{\vs}{\vec s}
\newcommand{\xx}{{\vec x}}
\newcommand{\xxc}{\xx_0}
\newcommand{\xxi}{{\vec a}}
\newcommand{\pola}{\theta}
\newcommand{\ssnd}{c}
\newcommand{\Mach}{M}
\newcommand{\Mfac}{\overline\Jv}
\newcommand{\ps}{\pd s}
\newcommand{\pl}{\pd\lrv}
\newcommand{\pr}{\pd\rad}
\newcommand{\po}{\pd\pola}
\newcommand{\px}{\pd x}
\newcommand{\py}{\pd y}

\newcommand{\vpoti}{\vpot_\infty}
\newcommand{\Machi}{\Mach_\infty}
\newcommand{\vvi}{\vv_\infty}
\newcommand{\vi}{\vx_\infty}
\newcommand{\ssndi}{\ssnd_\infty}
\newcommand{\pivi}{\piv_\infty}
\newcommand{\densi}{\dens_\infty}
\newcommand{\ppi}{\pp_\infty}
\newcommand{\stfi}{\stf_\infty}

\renewcommand{\sobex}{s}
\renewcommand{\sobs}{\soba\sobex_{\lrv\pola}}
\newcommand{\sobh}{\soba{\sobex-\half}_\lrv}

\section{Introduction}

\begin{figure*}[h]
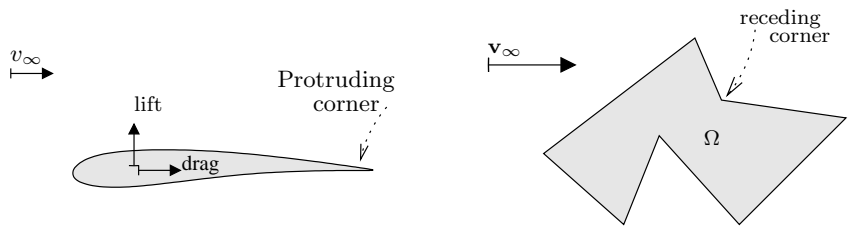
%
  \parbox{2.5in}{\input{kj.pstex_t}}\hfill\parbox{3in}{\input{polygonflow.pstex_t}}%
  \caption{Left: flow around a body that is smooth except for one protruding corner;
    right: flow around a polygon.}%
  \label{fig:polygon}%
\end{figure*}%

Consider steady planar flow around bounded solids (see fig.\ \ref{fig:polygon}). 
A corner in the solid is called \defm{protruding} if it has exterior (fluid-side) angle greater than $180^\circ$,
\defm{receding} if the angle is less than that. 
Regarding solids with three or more protruding corners, \cite{elling-polylow} shows 
in the special case of nondegenerate polygons that nonzero incompressible potential\footnote{irrotational inviscid} 
flows cannot have bounded velocity,
and that nonzero uniformly subsonic potential flows with sufficiently low Mach number do not exist.

This prior result is rather limited; in particular subsonic flows with non-small Mach number are not ruled out. 
In this paper we prove a more satisfactory result: around solids whose boundary is piecewise analytic 
with at least three protruding corners, no nonzero uniformly subsonic flows exist, for polytropic pressure law with isentropic coefficient
above $1$.  

If the velocity field of an incompressible potential flow is square-integrable near a receding corner, 
it is necessarily bounded\footnote{in fact H\"older-continuous, with limit zero in the corner} there. 
This is generally no longer true if the corner is protruding, which explains their significance \cite{elling-hyp2016}. 
Compressible potential flows do not allow unbounded velocities\footnote{except for ``unusual'' pressure laws}:
mathematically the Bernoulli equation links the density to the velocity, with density undefined above a certain \defm{limit speed}. 
Physically there is no reasonable way to extend the density definition to higher velocities, since calculations using the pressure law show that 
a material parcel of fluid cannot\footnote{for the chosen model, i.e.\ neglecting viscosity, heat conduction etc., and with $\gamma>1$ equation of state} be accelerated to 
arbitrarily high velocity by moving to regions of ever lower pressure. 

Classical study of exterior flows focused on incompressible ones, especially by conformal mappings and complex analytic methods. 
For incompressible flow unbounded velocities are merely physically unappealing, but mathematically harmless, 
so that the nonexistence problems we consider were not prominent. 
During 1930--60 advances in complex analysis and the theory of planar nonlinear elliptic PDE 
allowed progress on the compressible subsonic case (\cite{frankl-keldysh-1934,shiffman-exi-potf,bers-exi-uq-potf,finn-gilbarg-uniqueness}).  
Again nonexistence was not prominent due to focus on bodies with a single protruding corner, 
where problem parameters such as \defm{circulation} can be adjusted to render the velocity finite at that corner. 
In fact the corresponding \defm{Kutta-Joukowsky condition} yielded one of the major successes of mathematical fluid dynamics, 
a formula for lift\footnote{force perpendicular to flow direction} on aircraft wings that is in reasonable agreement with experimental data
in some regimes (small angle of attack, low velocity, etc.; see e.g.\ fig.\ 6.7.10 and surrounding text in \cite{batchelor}). 

To quote Finn and Gilbarg \cite[p.\ 58]{finn-gilbarg-uniqueness}:
\begin{quote}
  ``Unlike the case of incompressible fluids, it appears very likely that in the theory of subsonic flows the Kutta-Joukowsky condition 
  need not be imposed as an added hypothesis,
  but rather is a consequence of the subsonic character of the flow.'' 
\end{quote}
Already for two protruding corners, our prior work \cite{elling-twocorner} shows nonexistence
of uniformly subsonic flows around particular profiles including flat plates for most angles of attack. 
In absence of protruding corners the Kutta condition is void, allowing infinitely many flows,
unless other conditions are added.

We observe an amusing coincidence: potential flow appears to be an inadequate model in all cases but the one that 
happens to be arguably the most interesting and valuable for applications: the case of a single protruding corner. 
It is good practice to be skeptical about ``coincidences'', 
so we are tempted to turn the observation on its head: single-corner profiles are favored by design or evolution 
since several corners generally do not allow flows that are near-potential, 
hence tend to cause higher drag\footnote{force in flow direction}; 
whereas flow in absence of corners is poorly controlled due to non-unique circulation.
This argument should not be continued too far as it is not only heuristic and vague, 
but also partially wrong: drag is desirable for some purposes. 

Approximation of profiles by polygonal curves or other cornered bodies is a popular tool, particularly for applying conformal mapping techniques. 
Such approximations generally contain too many protruding corners to permit existence of subsonic or bounded-velocity incompressible flows,
possibly in contrast to the smoother original profile. 

Whenever irrotational subsonic inviscid flows do not exist, there are several alternatives. 
The most natural one is to consider that in reality vorticity is shed from obstacles; 
this is the main mechanism for generating \defm{drag} in the low-viscosity low-Mach regime. 
Another option is to consider transonic solutions, i.e.\ with supersonic regions at the solid. 
Such \defm{supersonic bubbles} are well-known in subsonic but nearly sonic flow at smooth outwardly curved boundary parts; 
protruding corners can be considered a limit case of those. 

In section \ref{section:equations} we review the necessary PDE and models. 
In section \ref{section:regularity} we recall more or less well-known regularity results for compressible potential flow.
In section \ref{section:local} we analyze the local structure of the \defm{body streamline};
in section \ref{section:global} we prove the main
Theorem \ref{th:threecorneranalytic}, which also provides some detailed information about incompressible flows 
and the attachment of body streamlines to protruding corners.

\section{Equations}
\label{section:equations}%

The \defm{isentropic Euler} equations are
\begin{alignat}{5} 
0 &= \pt\dens + \ndiv(\dens\vv),  \label{eq:tmass}\\
0 &= \pt(\dens\vv) + \ndiv(\dens\vv\tensor\vv) + \nabla\pp,  \label{eq:tmom}
\end{alignat} 
where $\vv$ is velocity, $\pp$ pressure. 
We focus on the \defm{polytropic} pressure law 
\begin{alignat}{5} \pp=\ppf(\dens) = \dens^\isenc \label{eq:p-polytropic} \end{alignat} 
with \defm{isentropic coefficient} $\isenc>1$, but our discussion can be extended to some other analytic $\ppf$ as well. 
Using \eqref{eq:tmass} we can transform \eqref{eq:tmom} to
\begin{alignat}{5} 0 = \Dt\vv + \nabla\piv \qquad(\Dt=\pt+\vv\dotp\nabla) \label{eq:v}\end{alignat} 
where  $\piv=\pif(\dens)$ is defined (up to an additive constant) by
\begin{alignat}{5} \pif_\dens(\dens) = \dens^{-1}\ppf_\dens(\dens) \quad. \label{eq:pp-piv}\end{alignat} 
The \defm{speed of sound} is 
\begin{alignat*}{5} \ssnd = \sqrt{\ppf_\dens(\dens)}. \end{alignat*} 

For smooth flow, if the \defm{vorticity} $\vort=\nabla\crossp\vv$ is zero everywhere at one time, then it is zero at all times. 
Such flows are called \defm{irrotational} or \defm{potential}. 
$\ncurl\vv=0$ implies
\begin{alignat}{5} \vv = \nabla\vpot \label{eq:vpot}\end{alignat} 
for a scalar \defm{velocity potential} $\vpot$ (which is locally defined and may be multivalued when extended to non-simply-connected domains). 

Henceforth we focus on stationary flow:
\begin{alignat}{5} 0 &= \ndiv(\dens\vv) \label{eq:mass} \quad, \\
 0 &= \vv\dotp\nabla\vv+\nabla\piv \quad. \notag \end{alignat} 
We substitute \eqref{eq:vpot} into the second equation to obtain\footnote{with $\vv^2=\vv\vv^T$ and $\hess$ the Hessian operator}
\begin{alignat*}{5} 0 = \nabla\vpot^T\hess\vpot + \nabla(\pif(\dens)) = \nabla\big( \half|\nabla\vpot|^2 + \pif(\dens) \big) \quad. \end{alignat*} 
This implies the \defm{Bernoulli relation}
\begin{alignat}{5} \half|\vv|^2+\pif(\dens) = \text{Bernoulli constant}  \label{eq:ber}\end{alignat} 
which is constant globally, not just along streamlines.
$\pif_\dens(\dens)=\dens^{-1}\ppf_\dens(\dens)=\dens^{-1}\ssnd^2>0$, so $\pif$ is strictly increasing, and we may solve for
\begin{alignat}{5} \dens = \ipif \big( \text{Bernoulli constant} - \half|\vv|^2 \big) \label{eq:pividens}\end{alignat} 
for some maximal interval of $|\vv|$ closed at its left endpoint $0$. 

Substituting \eqref{eq:pividens} into \eqref{eq:mass} yields \defm{compressible potential flow}, a second-order scalar differential equation for $\vpot$.
Assuming sufficient smoothness it can be expanded to\footnote{with Schur product $A:B=\trace(A^TB)$; note $A:\vec w^2=\vec w^TA\vec w$}
\begin{alignat}{5} 0 = \big(I-(\frac{\vv}{\ssnd})^2\big):\hess\vpot = \big(1-(\frac{v^x}{\ssnd})^2\big)\vpot_{xx} - 2\frac{v^x}{\ssnd}\frac{v^y}{\ssnd}\vpot_{xy} 
+ \big(1-(\frac{v^y}{\ssnd})^2\big)\vpot_{yy} \label{eq:comp-potf}\end{alignat} 
where $\ssnd$ is a function of $\dens$, hence of $\vv=\nabla\vpot$. 
The eigenvectors of the coefficient matrix $I-(\vv/\ssnd)^2$ are $\vv$ and\footnote{$\perp$ counterclockwise rotation by $\pi/2$} $\vv^\perp$, 
with eigenvalues $1-\Mach^2$ and $1$ where 
\begin{alignat*}{5} \Mach \defeq \vlen\vv/\ssnd \end{alignat*} 
is the \defm{Mach number}. 
\eqref{eq:comp-potf} is elliptic in a given point if and only if 
\begin{alignat*}{5} \Mach < 1 \quad, \end{alignat*} 
i.e.\ if and only if velocity $|\vv|$ is below the speed of sound $\ssnd$; such flows are called \defm{subsonic}. 

Instead we use the \defm{streamfunction formulation} of irrotational flow. To this end, use conservation of mass $\ndiv(\dens\vv)=0$
to obtain\footnote{with $\nabla^\perp=(-\partial_y,\partial_x)$}
\begin{alignat*}{5} \dens\vv = -\nperp\stf \end{alignat*} 
for a scalar \defm{stream function} $\stf$. 
Consider the Bernoulli relation \eqref{eq:ber} in the form
\begin{alignat}{5} \text{Bernoulli constant} = \subeq{\supeq{\half|\dens\vv|^2}{\vms}~\dens^{-2}+\pif(\dens) }{\eqdef \rhsf(\dens,\vms)} \quad \label{eq:berstf} \end{alignat} 
and apply the implicit function theorem.
At solutions $(\dens,\vms)$ of \eqref{eq:berstf} 
that are vacuum-free ($\dens>0$) and subsonic,
\begin{alignat*}{5} 
\frac{\partial\rhsf}{\partial\vms}
&=
\dens^{-2} > 0 \quad\text{and}
\\
\frac{\partial\rhsf}{\partial\dens}
&=
-\dens^{-3}|\dens\vv|^2+\pif_\dens(\dens)
=
\dens^{-1}(c^2-|\vv|^2) > 0 \quad,
\end{alignat*} 
so we obtain a solution
\begin{alignat*}{5} \frac1\dens = \hdiv(\vms) \end{alignat*} 
for a strictly increasing function $\hdiv$ defined for $\vms$ in some maximal interval $\cli{0}{\vmssonic}$ for some constant $\vmssonic\in\loi0\infty$; for $\vms=\vmssonic$ the velocity is exactly sonic.

Having solved the mass and Bernoulli equations it remains to ensure irrotationality:
\begin{alignat}{5} 0 = \ncurl \vv = \ncurl \frac{-\nperp\stf}{\dens} = -\ndiv\Big( 
\hdiv(\frac{|\nabla\stf|^2}{2}) \nabla\stf 
\Big) \label{eq:stf-divform} \end{alignat} 
Assuming sufficient additional regularity, differentiation yields after some calculation that
\begin{alignat}{5} 
0
&=
\big(1-(\frac{\vv}{\ssnd})^2\big):\hess\stf = \big(1-(\frac{v^x}{\ssnd})^2\big)\stf_{xx} - 2\frac{v^xv^y}{\ssnd^2}\stf_{xy} + \big(1-(\frac{v^y}{\ssnd})^2\big)\stf_{yy} 
\label{eq:stf-2d}
\end{alignat} 
which has the same coefficient matrix as \eqref{eq:comp-potf}; again it is elliptic if and only if the flow is subsonic. 

The incompressible limit of \eqref{eq:stf-2d} is obtained by (for example) considering sequences of solutions with velocities 
approaching $0$, hence sound speed and density converging to positive constants. This yields
\begin{alignat}{5} 0 &= \Lap\stf. \label{eq:harmonic}\end{alignat}

At solid boundaries we use the standard \defm{slip condition}
\begin{alignat}{5} 0 = \vn \dotp \vv\quad, \label{eq:slip}\end{alignat} 
where $\vn$ is a normal to the solid. 
Using $\dens\vv=-\nperp\stf$ we obtain 
\begin{alignat}{5} \stf = \const \label{eq:stf-const} \end{alignat} 
on each connected component of the slip boundary, in our case only one, so we may take $\stf=0$ by adding a constant to $\stf$
which does not affect \eqref{eq:stf-divform}.

\section{Regularity}
\label{section:regularity}

\begin{figure*}
  \centerline{\input{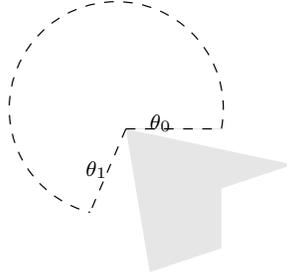}}
  \caption{A protruding corner is the center of a pacman (circular sector with fluid-side angle $>\pi$)}
  \label{fig:pacman}
\end{figure*}

We first state precise assumptions about our solid, in particular defining ``piecewise analytic'' clearly:
\begin{definition}
  \label{def:setting}%
  The solid body $\Body\subset\R^2$ is boundec closed connected nonempty;
  $\Dom\defeq\R^2\setdiff\Body$ is the set of \defm{fluid points}. 
  $\Bdry$ is ``piecewise analytic'' in the following 
  sense:
  it is a union of finitely many simple curves that are analytic\footnote{``regular'' analytic curves, not varieties} including endpoints, 
  and pairwise disjoint except perhaps at their endpoints.
  We call the endpoints \defm{corners}, the other boundary points are called \defm{smooth}. 
  A corner is \defm{protruding} if it is the center of a \defm{pacman} (fig.\ \ref{fig:pacman}), i.e.\ an open circular sector with angle greater than $180^\circ$
  and contained in the fluid domain $\Dom$. 
  
  A \defm{compressible uniformly subsonic potential flow} is represented by a distribution $\stf$ on $\Dom$ with distributional derivatives in $\Linf(\Dom)$ 
  satisfying \\
  1. $\esssup_\Dom\Mach<1$ (uniformly subsonic), \\
  2. the slip condition $\stf(\xx)\conv0$ as\footnote{by our assumptions on $\Dom$ any boundary point $\xx_0$ is the endpoint of a line segment whose interior is through the fluid $\Dom$} $\xx\conv\xx_0$ for any $\xx_0\in\Slipb$, an\\
  3. the partial differential equation \eqref{eq:stf-divform}; an \defm{incompressible potential flow} satisfies \eqref{eq:harmonic} instead. 
\end{definition}
For the remainder of the paper we consider only potential flows that are \defm{nonzero}, meaning $\vv\neq 0$. 

\begin{remark}
  The definition allows parts of the body to have ``zero thickness'' (see fig.\ \ref{fig:tripod}), 
  with one of the analytic curve making up the boundary having fluid on both sides. 
\end{remark}

Since the equation is uniformly elliptic, with analytic coefficients,
we may apply Morrey estimates (for example)
as well as bootstrapping Schauder estimates to obtain $\Cinf(\cDom\setdiff\Corners)$ regularity
 (see also \cite[Proposition 2]{elling-polylow} and \cite{elling-protrudingangle}). 

Moreover $\stf$ is also analytic in $\Dom$, 
by classical results\footnote{see e.g.\ \cite[Chapitre IV, first ``Th\'eor\`eme'']{bernstein-1904} and the references in the introduction;
 see also \cite{friedman-analyticity,morrey-analyticity-i}},
since the pressure function $\ppf$ is analytic \eqref{eq:p-polytropic} and hence so are the coefficient functions. 
Using \cite{morrey-analyticity-ii} it is possible to extend the result to analytic parts of the boundary, but it is just as easy to do it ``by hand'':

Given a boundary point so that the boundary is an analytic curve in a neighbourhood of the point. 
We may choose coordinates so that the curve is parametrized as $y=f(x)$ for an analytic function $f$, with $y>f(x)$ a fluid region. 
An analytic change of coordinates $w=y-f(x)$ maps the boundary to $\set{w=0}$, yielding a new uniformly elliptic PDE whose coefficient functions are still analytic (and now dependent on $\stf(x,w)$ and $x,w$ as well). 
We extend $\stf$ from $w>0$ by odd reflection: for $w<0$, set $\stf(x,w)=-\stf(x,-w)$. The resulting function has no jump, since $\stf=0$ on the slip boundary, and its normal ($\pd w$) derivative has no jump either, by odd reflection. 
Hence it is locally $C^{1,1}$ (note that we already obtained $\Cinf$ on the $w\geq 0$ part of the neighbourhood),
so the coefficients, which are still functions of $x,w,\stf,\stf_x,\stf_w$, but not second derivatives, are $C^{0,1}$, in particular $C^{0,\eps}$ for some $\eps>0$, as needed to bootstrap Schauder estimates. 
We obtain that the extended $\stf$ is in fact analytic, which it remains after passing back to $x,y$ coordinates. 
Hence:
\begin{proposition}
  \label{prop:base-regularity}%
  $\stf$ is analytic in $\Dom$ as well as at each fluid side of analytic boundary segments. 
\end{proposition}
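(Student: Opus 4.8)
The plan is to treat interior analyticity and analyticity up to the analytic boundary arcs separately, reducing the latter to the former by flattening the arc and reflecting $\stf$ oddly across it. For the interior, recall that we have already obtained $\stf\in\Cinf(\cDom\setdiff\Corners)$ by Morrey and Schauder bootstrapping on the uniformly elliptic equation \eqref{eq:stf-divform} (equivalently \eqref{eq:stf-2d}). In the incompressible case $\stf$ is harmonic by \eqref{eq:harmonic}, hence real-analytic in $\Dom$. In the compressible case the coefficients of \eqref{eq:stf-2d} are analytic functions of $\nabla\stf$ on the relatively compact uniformly subsonic velocity range, since $\ppf$ is analytic \eqref{eq:p-polytropic} and therefore so are $\hdiv$, $\dens$, $\ssnd$ and the factors $1-(\vx/\ssnd)^2$, $-2\vx\vy/\ssnd^2$, $1-(\vy/\ssnd)^2$; the classical analyticity theorem for $\Ck2$ (a fortiori $\Cinf$) solutions of second-order uniformly elliptic equations with analytic coefficient functions (Bernstein; Friedman; Morrey) then gives that $\stf$ is real-analytic in $\Dom$.

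For a smooth boundary point $\xx_0$, after a rigid motion write the arc of $\Bdry$ through $\xx_0$ locally as $\set{y=f(x)}$ with $f$ analytic and $\set{y>f(x)}$ a fluid side, and apply the analytic diffeomorphism $(x,y)\mapsto(x,w)\defeq(x,y-f(x))$; it sends the arc to $\set{w=0}$ and transforms \eqref{eq:stf-divform} into a second-order uniformly elliptic equation whose coefficients are analytic in $(x,w,\stf,\stf_x,\stf_w)$ — crucially involving no second derivatives of $\stf$. On $\set{w\ge 0}$ near $\xx_0$ we already know $\stf\in\Cinf$, and $\stf=0$ on $\set{w=0}$ by the slip condition \eqref{eq:stf-const}; extend $\stf$ to $\set{w<0}$ by odd reflection, $\stf(x,-w)\defeq-\stf(x,w)$. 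Since $\stf$ vanishes on $\set{w=0}$ the extension has no jump there, and being odd its $\pd w$-derivative has no jump either, so the extended $\stf$ is locally $\Ck{1,1}$ across $\set{w=0}$ (it is $\Cinf$ on each side), in particular $\Ck{1,\eps}$.

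Next I would argue that the extension is still a strong (hence a.e.) solution of an analytic-coefficient uniformly elliptic equation on the full two-sided neighbourhood; no distributional source arises on $\set{w=0}$ because $\Ck{1,1}$ regularity makes the flux in the divergence form \eqref{eq:stf-divform} continuous. Then the coefficients, being analytic functions of the already-$\Ck{1,1}$ extension and of no higher derivatives, are $\Ck{0,1}\subset\Ck{0,\eps}$, so Schauder estimates upgrade the $\Ck{1,\eps}$ solution to $\Ck{2,\eps}$, the coefficients become $\Ck{1,\eps}$, and the standard bootstrap gives $\Cinf$ in the full neighbourhood; applying the interior analyticity theorem at points of $\set{w=0}$ then makes the extended $\stf$ analytic there. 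Transporting back through the analytic coordinate change, $\stf$ is analytic at $\xx_0$ from the fluid side; doing this for each fluid side of each arc, together with interior analyticity, yields the proposition. In the incompressible case one may alternatively straighten the arc by a conformal map and invoke the Schwarz reflection principle.

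The step I expect to be the main obstacle is the reflection, i.e.\ the assertion that the odd extension solves a suitable elliptic equation across $\set{w=0}$. When the arc is straight this is immediate, because \eqref{eq:stf-divform} is invariant under Euclidean isometries and under $\stf\mapsto-\stf$; for a curved analytic arc one must verify that the flattened equation retains enough reflection symmetry, or — perhaps cleaner — invoke directly the classical boundary analyticity theorem for elliptic equations with analytic coefficients and analytic (here zero) boundary data, which applies once the arc has been flattened to $\set{w=0}$. The two structural features the argument leans on — that the slip condition is homogeneous ($\stf=0$, legitimate here since the slip boundary has a single component, \eqref{eq:stf-const}) and that the equation's coefficients involve $\stf$ only through $\nabla\stf$, not through second derivatives — are precisely what make the odd extension a $\Ck{1,1}$ solution, hence regular enough to re-enter the Schauder bootstrap; the remainder is routine elliptic bootstrapping together with citation of the classical analyticity results.
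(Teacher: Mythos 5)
Your proposal is essentially the paper's own proof: interior analyticity from the classical theorems for analytic-coefficient elliptic equations, and boundary analyticity by flattening the arc via $w=y-f(x)$, extending $\stf$ oddly to get a $\Ck{1,1}$ (hence $\Ck{1,\eps}$) function whose coefficients depend only on $(x,w,\stf,\nabla\stf)$ and are therefore $\Ck{0,1}$, then re-entering the Schauder bootstrap. The one obstacle you flag --- that for a curved arc the odd extension need not satisfy the \emph{same} analytic-coefficient equation across $\set{w=0}$, since the flattened equation loses the relevant reflection parity when $f'\neq 0$ --- is a genuine point that the paper's ``by hand'' sketch also glosses over, and your proposed fallback (invoking the boundary analyticity theorem directly) is exactly the alternative the paper itself offers via the citation of Morrey's boundary-analyticity paper.
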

Analyticity also implies that $\stf$ cannot be locally zero anywhere, because then it would be globally zero, but we excluded zero flows. 

Finally, it is well-known (see \cite{bers-exi-uq-potf,finn-gilbarg-uniqueness} and \cite[Proposition 1]{elling-twocorner}) 
that $\nabla\stf$ is ``H\"older-continuous at infinity''; more precisely we may rotate coordinates, as is standard in the literature, so that 
the limit of the velocity $\vvi$ at infinity is $(\vxi,0)$ with constant $\vxi\geq 0$, and then:
\begin{proposition}
  \label{prop:stf-infinity}%
  \begin{alignat}{5} \stf(\xx)= \densi\big(\vxi y - \frac{\Gam}{2\pi} \pglau \log\sqrt{x^2+(\pglau y)^2}\big) + \const + o(1)  \quad\text{as $|\xx|\conv\infty$} 
  \label{eq:stf-infinity}\end{alignat}
  where $\pglau=\sqrt{1-\Machi^2}$ is the \defm{Prandtl-Glauert factor}, whereas $\Gam$ is the \defm{circulation};
  the $o(1)$ term has $o(1)$ derivative. 
\end{proposition}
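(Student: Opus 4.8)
\emph{Proof plan.} This is a classical fact; self-contained treatments appear in \cite{bers-exi-uq-potf,finn-gilbarg-uniqueness}, and the version needed here is essentially \cite[Proposition 1]{elling-twocorner}. I would take as known the two inputs from those references: outside a large ball $\Dom$ is an exterior domain on which $\stf$ is analytic (Proposition \ref{prop:base-regularity}), and $\nabla\stf$ is bounded near infinity and converges there at a power rate — this being the cited ``H\"older continuity at infinity'' — so that after the rotation normalizing $\vvi=(\vxi,0)$ one has $\nabla\stf(\xx)=\vec p_\infty+O(\vlen{\xx}^{-\alpha})$ for some $\alpha\in\boi01$, where $\vec p_\infty:=(0,\densi\vxi)$ (the limit is $\vec p_\infty$ since $\dens\vv=-\nperp\stf$ and $\dens\conv\densi$). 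Set $u:=\stf-\densi\vxi y$, so $u$ is analytic near infinity with $\nabla u=O(\vlen{\xx}^{-\alpha})\conv 0$.

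Next I would linearize. Write \eqref{eq:stf-divform} as $\ndiv\vec G(\nabla\stf)=0$ with $\vec G(\vec p):=\hdiv(\vlen{\vec p}^2/2)\,\vec p$; since $\ppf$ is analytic, $\vec G$ is analytic near the strictly subsonic value $\vec p_\infty$ (indeed $\Machi<1$), so substituting $\nabla\stf=\vec p_\infty+\nabla u$ and Taylor-expanding $\vec G$ to first order gives $\ndiv(A_\infty\nabla u)=\ndiv\vec F$ on a neighbourhood of infinity, where $A_\infty:=D\vec G(\vec p_\infty)$ is a constant symmetric positive-definite matrix and $\vec F:=A_\infty\nabla u-(\vec G(\vec p_\infty+\nabla u)-\vec G(\vec p_\infty))$ satisfies $\vlen{\vec F}\le\gC\vlen{\nabla u}^2$ (and in fact $\ndiv\vec F=O(\vlen{\nabla u}\,\vlen{\hess u})$). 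A short calculation from the Bernoulli relation \eqref{eq:berstf} gives $A_\infty=\densi^{-1}\big(I+(\ssndi^2-\vxi^2)^{-1}\,\vvi^\perp\tensor\vvi^\perp\big)$, which in the normalized frame is $\densi^{-1}\operatorname{diag}(1,\pglau^{-2})$ with $\pglau=\sqrt{1-\Machi^2}$. The linear change of variables $\zeta:=(\xi,\eta):=(x,\pglau y)$, $\tilde u(\zeta):=u(\xi,\eta/\pglau)$, then makes the principal part a multiple of the Laplacian: $\Lap\tilde u=\ndiv\tilde{\vec F}$ on an exterior planar domain $\set{\vlen{\zeta}>R_0}$, with $\nabla\tilde u\conv 0$, $\vlen{\tilde{\vec F}}\le\gC\vlen{\nabla\tilde u}^2$, and $\vlen{\zeta}^2=x^2+(\pglau y)^2$.

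The technical heart is the exterior asymptotics of this perturbed Laplace equation: a solution with $\vlen{\nabla\tilde u}=O(\vlen{\zeta}^{-\alpha})$ and $\vlen{\tilde{\vec F}}\le\gC\vlen{\nabla\tilde u}^2$ satisfies $\tilde u=\av_0+\av_1\log\vlen{\zeta}+o(1)$ for constants $\av_0,\av_1$. I would prove this by a bootstrap as in the references: interior Schauder estimates make $\tilde u$ a classical solution of a uniformly elliptic equation with coefficients tending to $I$; the representation $\tilde u=h+\nabla N\ast\tilde{\vec F}$ (with $h$ exterior-harmonic of bounded gradient, $N$ the planar Newtonian kernel, source suitably truncated) together with $\vlen{\tilde{\vec F}}=O(\vlen{\zeta}^{-2\alpha})$ lets one improve $\alpha\upconv 1$; once $\vlen{\nabla\tilde u}=O(\vlen{\zeta}^{-1})$, Schauder gives $\vlen{\hess\tilde u}=O(\vlen{\zeta}^{-2})$, so $\vlen{\ndiv\tilde{\vec F}}=O(\vlen{\zeta}^{-3})$ is integrable at infinity, whence $\nabla N\ast\tilde{\vec F}=o(1)$ and the bounded-gradient exterior harmonic $h$ has, by separation of variables, the Laurent form $\av_0+\av_1\log\vlen{\zeta}+O(\vlen{\zeta}^{-1})$. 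Changing variables back gives $\stf=\densi\vxi y+\av_1\log\sqrt{x^2+(\pglau y)^2}+\const+o(1)$; the claim that the $o(1)$ remainder has $o(1)$ derivative is then automatic, since both $\nabla\stf-\vec p_\infty$ and $\nabla\log\sqrt{x^2+(\pglau y)^2}$ tend to $0$.

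It remains to identify $\av_1$ with the circulation. The key is that $\vec G(\nabla\stf)=\hdiv(\vms)\,\dens\vv^\perp=\vv^\perp$ is divergence-free (irrotationality), so by the divergence theorem on the annulus between $\bdry\Body$ and a large circle $\vlen{\xx}=\rho$, and the slip condition, the flux $\oint_{\vlen{\xx}=\rho}\vec G(\nabla\stf)\dotp\vn\,ds$ is independent of $\rho$ and equals $-\Gam$, where $\Gam$ is the circulation of $\vv$ around $\Body$. Subtracting $\vec G(\vec p_\infty)$ (flux $0$) and $\vec F$ (flux $o(1)$, by the decay above) and evaluating $\oint_{\vlen{\xx}=\rho}A_\infty\nabla u\dotp\vn\,ds$ in the limit $\rho\conv\infty$ with the asymptotic form of $u$ — which reduces to the elementary integral $\int_0^{2\pi}(\cos^2\phi+\pglau^2\sin^2\phi)^{-1}\,d\phi=2\pi/\pglau$ — yields $2\pi\av_1/(\densi\pglau)=-\Gam$, i.e.\ $\av_1=-\densi\pglau\Gam/(2\pi)$, the coefficient in \eqref{eq:stf-infinity}. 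The main obstacle is the chicken-and-egg in the bootstrap — decay of $\nabla\tilde u$ is needed to make the quadratic source integrable, yet is obtained only through the representation — which the cascade $O(\vlen{\zeta}^{-\alpha})\to O(\vlen{\zeta}^{-1})\to\text{(integrable source)}$ resolves; a secondary nuisance is the bookkeeping that pins down the constant $\av_1$ and the exact argument $\sqrt{x^2+(\pglau y)^2}$ of the logarithm.
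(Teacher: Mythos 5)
The paper does not prove this proposition at all: it is stated as a known result and delegated entirely to \cite{bers-exi-uq-potf,finn-gilbarg-uniqueness} and \cite[Proposition 1]{elling-twocorner}, so there is no ``paper proof'' to compare against. Your reconstruction follows exactly the classical route of those references, and the parts that can be checked line by line are right: $\vec p_\infty=(0,\densi\vxi)$ from $\dens\vv=-\nperp\stf$; the linearized matrix $A_\infty=D\vec G(\vec p_\infty)=\densi^{-1}\big(I+(\ssndi^2-\vxi^2)^{-1}\vvi^\perp\tensor\vvi^\perp\big)=\densi^{-1}\operatorname{diag}(1,\pglau^{-2})$ (using $\hdiv'(\vms)=\dens^{-3}/(\ssnd^2-|\vv|^2)$ from the Bernoulli relation); the Prandtl--Glauert substitution producing the argument $\sqrt{x^2+(\pglau y)^2}$; and the flux identification, since $\vec G(\nabla\stf)=\vv^\perp$ is divergence-free with $\oint\vv^\perp\dotp\vn\,ds=-\Gam$ and $\int_0^{2\pi}(\cos^2\phi+\pglau^2\sin^2\phi)^{-1}d\phi=2\pi/\pglau$ gives $\av_1=-\densi\pglau\Gam/(2\pi)$ as in \eqref{eq:stf-infinity}.

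The only place you are slightly too quick is the very end of the bootstrap. From $|\nabla\tilde u|=O(\vlen\zeta^{-1})$ you get $|\tilde{\vec F}|=O(\vlen\zeta^{-2})$ and $|\ndiv\tilde{\vec F}|=O(\vlen\zeta^{-3})$; this makes the source integrable, but its first moment still diverges logarithmically, so ``$\nabla N\ast\tilde{\vec F}=\av_0'+o(1)$'' does not follow from integrability alone. The standard fix (and what Finn--Gilbarg actually do) is one more round: first isolate the explicit leading term $\nabla\tilde u=\av_1\zeta/\vlen\zeta^2+O(\vlen\zeta^{-2+\eps})$, observe that the quadratic contribution of the pure log-gradient to $\tilde{\vec F}$ is explicit and its potential computable, and only then conclude that the remaining source decays fast enough for the constant term to converge. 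This is a refinement inside the step you already identified as the technical heart, not a change of strategy; with it supplied, the plan is a complete and correct proof.
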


\section{Local structure of the body streamline}
\label{section:local}

In contrast to our earlier work \cite{elling-polylow}, we choose to analyze not the sets $\Ps,\Ns$ of positive and negative stream function values, 
but rather the \defm{body\footnote{``body'' since $\stf=0$ at the solid boundary; by continuity of $\stf$ every other streamline is separated from the body} streamline} 
\begin{alignat*}{5} \Zs \defeq \selset{\xx\in\Dom}{\stf(\xx)=0}. \end{alignat*} 
(Note that as defined $\Zs$ includes only fluid points, not solid boundary points.)
Streamlines are also commonly used to construct informal arguments in applied works on fluid flow
(and some of our following arguments may have appeared implicitly or in rather different form  in some of the more applied or historical literature, especially for incompressible flow). 
But here, as so often, the most elegant and powerful ways of \emph{informal} physical reasoning turn out to be rather arduous once the necessary details lacking for a \emph{rigorous} mathematical proof are filled in; 
after all, level sets of analytic functions are sufficiently complicated to motivate large parts of complex analysis, algebra etc.

Nevertheless in our particular settings $\Zs$ turns out to be manageable.
The local structure of real-analytic varieties is well-understood \cite{lojasiewicz,bierstone-milman}, 
and we are in two dimensions where Puiseux series expansion yields a shorter approach.
Since our $\stf$ is not merely real-analytic but satisfies an explicit elliptic PDE as well, we can simplify even that process considerably. 

We first consider the structure of $\Zs$ near infinity, using the known asymptotics we described earlier. 
\begin{proposition}
  \label{prop:curves-at-inf}%
  If $\vvi\neq 0$, then 
  in some neighbourhood $\selset{(x,y)}{|x|>R\ \text{or}\ |y|>R}$ of infinity, 
  $\Zs$ consists of two analytic curves, one parametrized by $x\in\boi{-\infty}{-R}$ and one by $x\in\boi{R}{\infty}$. 
\end{proposition}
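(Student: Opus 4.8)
The plan is to treat the logarithmic term in Proposition~\ref{prop:stf-infinity} as a lower-order perturbation of the leading linear term $\densi\vxi\,y$, so that near infinity $\Zs$ is a small logarithmic deformation of the $x$-axis, and then to invoke the analytic implicit function theorem. Write the expansion as
\[
\stf(x,y)=\densi\vxi\,y-\densi\tfrac{\Gam}{2\pi}\pglau\log\sqrt{x^2+\pglau^2y^2}+\const+o(1),
\]
where the $o(1)$ term, together with its gradient, tends to $0$ as $|\xx|\to\infty$. Since $\vvi\neq0$ and coordinates have been rotated, $\vxi>0$; and since $\Body$ is bounded, $\{|\xx|>R\}\subset\Dom$ once $R$ is large, where $\stf$ is analytic by Proposition~\ref{prop:base-regularity}. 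These are the only places the hypotheses enter --- without $\vxi>0$ the logarithm would dominate and the conclusion would be false.

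First I would confine $\Zs$ near infinity to a thin logarithmic neighbourhood of the $x$-axis. On $\Zs$ the relation $\stf=0$ forces $\vxi|y|\le\tfrac{|\Gam|\pglau}{2\pi}\log\sqrt{x^2+\pglau^2y^2}+O(1)$ as $|\xx|\to\infty$; a short case split (the alternative $|y|\ge|x|$ would give $\vxi|y|\le C\log|y|+O(1)$, impossible for $|\xx|$ large) then yields $|y|\le h(x):=C_0(1+\log|x|)$ for a suitable constant $C_0$. This step simultaneously shows that $\Zs$ meets the strips $\{|x|\le R,\ |y|>R\}$ in a bounded set only --- hence nowhere once $R$ is large --- and confines $\Zs\cap\{|x|>R\}$ to the region $\mathcal R:=\{|x|>R,\ |y|<h(x)\}\subset\Dom$. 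Next I would check $\pd y\stf>0$ on all of $\mathcal R$: differentiating the expansion gives $\pd y\stf=\densi\vxi-\densi\tfrac{\Gam\pglau^3}{2\pi}\tfrac{y}{x^2+\pglau^2y^2}+o(1)$, and on $\mathcal R$ the middle term is $O(\log|x|/x^2)$ while the $o(1)$ vanishes as $|x|\to\infty$, so $\pd y\stf\ge\tfrac12\densi\vxi>0$ there after enlarging $R$. Finally I would install barriers at $y=\pm h(x)$: using $\log\sqrt{x^2+\pglau^2h(x)^2}=\log|x|+o(1)$ and choosing $C_0$ large enough, one finds $\stf(x,h(x))\to+\infty$ and $\stf(x,-h(x))\to-\infty$ as $|x|\to\infty$, so $\stf(x,h(x))>0>\stf(x,-h(x))$ for all $|x|>R$.

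Putting these together: for each fixed $x_0$ with $|x_0|>R$, monotonicity in $y$ and the sign change provide a unique zero $y=g(x_0)\in(-h(x_0),h(x_0))$ of $\stf(x_0,\cdot)$, and by the confinement step there are no zeros with $|y|\ge h(x_0)$; hence $\Zs\cap\{|x|>R\}$ is precisely the graph $\{(x,g(x)):|x|>R\}$, which has two connected components, one over $\boi{R}{\infty}$ and one over $\boi{-\infty}{-R}$. Analyticity of $g$, and thus of the two curves, follows from the analytic implicit function theorem applied to the analytic function $\stf$ with $\pd y\stf\neq0$. Taking the $R$ just produced as the $R$ in the statement, and recalling the strips contain no point of $\Zs$, completes the proof. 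I expect the only real difficulty to lie in the confinement and barrier steps: one must read off from the asymptotics --- where $y$ also appears inside the logarithm on both sides of $\stf=0$ --- the clean bound $|y|=O(\log|x|)$, and then choose the strip half-width $C_0\log|x|$ large enough that the linear term beats the logarithm in the barrier estimate, yet small enough (still $O(\log|x|)$) that $\pd y\stf$ stays positive; reconciling these while absorbing $\const$ and $o(1)$ into the constants is the fiddly part, though nothing here is deep.
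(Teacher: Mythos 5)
Your proof is correct, but it works noticeably harder than the paper's. The paper's entire argument is: Proposition~\ref{prop:stf-infinity} already asserts that the $o(1)$ remainder has $o(1)$ gradient, and the gradient of the logarithmic term is $O(1/|\xx|)$ \emph{everywhere}, so $\nabla\stf=(0,\densi\vxi)+o(1)$ uniformly near infinity and hence $\stf_y\geq\tfrac12\densi\vxi>0$ on the whole neighbourhood $\{|x|>R\ \text{or}\ |y|>R\}$. This single observation replaces your three separate steps: on each vertical line $\{x=x_0\}$ with $|x_0|>R$ the function $\stf(x_0,\cdot)$ is strictly increasing with derivative bounded below by a positive constant, so it runs from $-\infty$ to $+\infty$ and has exactly one zero --- no confinement to a logarithmic strip and no barrier functions are needed; the strips $\{|x|\le R,\ |y|>R\}$ are cleared by noting $\stf\to\pm\infty$ there after enlarging $R$. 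Your route is sound (the confinement bound $|y|=O(\log|x|)$, the sign of $\stf_y$ in the strip, and the barriers at $y=\pm C_0(1+\log|x|)$ all check out, and the case split $|y|\ge|x|$ versus $|y|<|x|$ is handled correctly), but the ``fiddly'' tension you anticipate between making $C_0$ large enough for the barriers yet small enough for monotonicity dissolves entirely once one notices that $\bigl|\partial_y\log\sqrt{x^2+(\pglau y)^2}\bigr|=\pglau^2|y|/(x^2+\pglau^2y^2)\le\pglau/(2|x|)$ with no restriction on $y$, so monotonicity never needed the strip in the first place. What your version buys is independence from the precise statement that the remainder's \emph{gradient} is $o(1)$: if one only knew the pointwise asymptotics of $\stf$ (plus enough to control $\partial_y$ of the remainder on the strip), the barrier argument would still localize the zero set, whereas the paper's one-line proof leans on the full gradient asymptotics.
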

\begin{proof}
  By Proposition \ref{prop:stf-infinity}, 
  \begin{alignat*}{5} \nabla\stf=(0,\densi\vxi)+o(1) \quad\text{as $|(x,y)|\conv\infty$,} \end{alignat*} 
  with $\densi>0$ and $\vxi>0$, 
  so $\stf_y(x,y)\geq\half\densi\vxi>0$ for $|x|>R$ or $|y|>R$ with $R$ sufficiently large. 
  Therefore $\stf\neq 0$ for $|y|>R$ and $|x|\leq R$ if we increase $R$ further as needed, 
  and $\stf(x,y)=0$ for precisely one $y=\hat y(x)$ for each $x$ with $|x|>R$, 
  By the implicit function theorem for analytic functions, using $\stf_y>0$, we obtain that $x\mapsto\hat y(x)$ is real-analytic. 
\end{proof}
Henceforth we call the $x\conv-\infty$ part of $\Zs$ ``negative infinity'' and $x\conv+\infty$ ``positive infinity''.

Now we consider the structure near finite fluid points.
\begin{proposition}
  \label{prop:strong-maxp}%
  $\stf$ cannot attain local extrema in fluid points unless the flow is zero.
\end{proposition}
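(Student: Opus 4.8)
The plan is to reduce this to the strong maximum principle for the uniformly elliptic equation that $\stf$ satisfies in the open fluid region, and then to use analyticity to promote local constancy to constancy on all of $\Dom$.

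First I would invoke Proposition \ref{prop:base-regularity}: since $\stf$ is analytic in $\Dom$, it has classical second derivatives there and satisfies the non-divergence form \eqref{eq:stf-2d}. Freezing the velocity $\vv=-\nperp\stf/\dens$ and sound speed $\ssnd$ along the given solution, I would read \eqref{eq:stf-2d} as a linear second-order equation
\[ a^{ij}(\xx)\,\pd{ij}\stf = 0, \qquad a(\xx) := I - \big(\tfrac{\vv(\xx)}{\ssnd(\xx)}\big)^2 , \]
with no first- or zeroth-order terms and with symmetric coefficient matrix $a(\xx)$ whose eigenvalues are $1-\Mach(\xx)^2$ (eigenvector $\vv$) and $1$ (eigenvector $\vv^\perp$). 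Uniform subsonicity, $\esssup_\Dom\Mach<1$, gives $1-\Mach(\xx)^2\ge 1-(\esssup_\Dom\Mach)^2>0$, so the equation is uniformly elliptic, with coefficients continuous (indeed analytic) in $\xx$. Alternatively one can argue directly from the divergence form \eqref{eq:stf-divform}: written as $\ndiv\big(\hdiv(\tfrac12|\nabla\stf|^2)\nabla\stf\big)=0$, its scalar coefficient $\hdiv(\tfrac12|\nabla\stf(\xx)|^2)$ is bounded between positive constants because $\hdiv$ is positive and continuous on $\cli0{\vmssonic}$ and the flow is uniformly subsonic, so the strong maximum principle for divergence-form equations applies.

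Next, assuming for contradiction that $\stf$ has a local maximum at some $\xxc\in\Dom$, I would take a ball $U\subset\Dom$ centered at $\xxc$ with $\stf\le\stf(\xxc)$ on $U$, so that $\stf$ attains its maximum over $U$ at the interior point $\xxc$. The classical strong maximum principle of E.\ Hopf --- applicable since the operator is uniformly elliptic with locally bounded coefficients and vanishing zeroth-order term --- then forces $\stf\equiv\stf(\xxc)$ on $U$. A local minimum is excluded identically by applying the argument to $-\stf$. Either way $\stf$ is constant on a nonempty open subset of the connected open set $\Dom$, hence, being analytic there, constant on all of $\Dom$; then $\vv=-\nperp\stf/\dens\equiv 0$, contradicting the standing hypothesis of nonzero flow.

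I do not expect a genuine obstacle here: the one place needing care is the reduction of the weak formulation \eqref{eq:stf-divform} to an equation to which a textbook maximum principle applies, and Proposition \ref{prop:base-regularity} together with uniform subsonicity handles exactly that.
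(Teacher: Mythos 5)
Your proposal is correct and follows essentially the same route as the paper: both reduce the claim to the classical strong maximum principle for the uniformly elliptic interior equation \eqref{eq:stf-2d} and then upgrade local constancy to global constancy. The only cosmetic difference is the last step — the paper concludes from constancy plus the slip condition $\stf=0$ on $\Slipb$ that $\stf\equiv0$, while you conclude $\vv=-\nperp\stf/\dens\equiv0$ directly from $\nabla\stf\equiv0$; both immediately contradict the standing nonzero-flow assumption.
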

\begin{proof}
  This is the classical strong maximum principle \cite[Theorem 3.5]{gilbarg-trudinger}, applied to our interior PDE \eqref{eq:stf-2d}:
  if $\stf$ attains a local extremum, then it is constant, but we assumed presence of a nonempty body $\Body$, and $\stf=0$ 
  at its nonempty boundary $\Slipb$ means $\stf$ is zero everywhere.
\end{proof}

\begin{proposition}
  \label{prop:vertex}%
  For a nonzero flow, 
  near each fluid point 
  $\Zs$ is a union of $2m$ ($m\geq 1$ integer) analytic curves that are pairwise disjoint except for ending in that point, where their tangents enclose equal angles $\pi/m$
  (the antipodes form a single analytic curve passing through the point). 
  Near each \emph{smooth} boundary point, $\Zs$ is a union of $m-1$ analytic curves pairwise disjoint except for their common boundary endpoint; 
  the curve and boundary tangents enclose equal angles $\pi/m$. 
  If $m\geq 2$, then we call the point a \defm{vertex}. 
\end{proposition}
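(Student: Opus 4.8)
The plan is to reduce the local structure to the leading homogeneous Taylor term of $\stf$ at the point — which \eqref{eq:stf-2d} forces to be harmonic — and then to read the zero set off a Weierstrass factorization whose Puiseux expansion degenerates to an ordinary power series because the leading term of a planar harmonic polynomial splits into distinct linear factors over $\mathds{C}$. Fix the point in question, $\xxc$; we may assume $\xxc\in\Zs$, otherwise $\stf(\xxc)\neq0$ and there is nothing to prove. By Proposition~\ref{prop:base-regularity} $\stf$ is analytic near $\xxc$; at a smooth boundary point it even extends analytically to a full neighbourhood $N$, and that extension still solves \eqref{eq:stf-2d} on $N$ — its left side is an analytic function of position vanishing on the fluid side, hence on all of $N$ by the identity theorem — and vanishes on the analytic boundary arc $\Gamma$ through $\xxc$, by the slip condition \eqref{eq:slip} and continuity. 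Since $\stf\not\equiv0$, the (extended) $\stf$ vanishes at $\xxc$ to a finite order $m\ge1$. If $m=1$, then $\nabla\stf(\xxc)\neq0$ and by the analytic implicit function theorem $\{\stf=0\}$ near $\xxc$ is a single analytic curve (its two half-arcs at angle $\pi=\pi/m$); at a boundary point this curve is forced to equal $\Gamma$, so $\Zs$ misses a neighbourhood of $\xxc$, matching ``$m-1=0$ curves'', and $\xxc$ is no vertex. So suppose $m\ge2$. Then $\nabla\stf(\xxc)=0$, hence the velocity $\vv(\xxc)=-\nperp\stf(\xxc)/\dens(\xxc)=0$, so the coefficient matrix $I-(\vv/\ssnd)^2$ of \eqref{eq:stf-2d} equals $I$ at $\xxc$. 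Inserting the homogeneous Taylor expansion $\stf=\sum_{k\ge m}P_k$ about $\xxc$ into \eqref{eq:stf-2d} and collecting the terms of lowest degree, $m-2$, gives $\Lap P_m=0$: the leading term is a nonzero harmonic homogeneous polynomial of degree $m$, so up to a nonzero scalar $P_m=\rad^m\cos m(\pola-\pola_0)$ in polar coordinates about $\xxc$, whose zero set is $2m$ rays spaced by $\pi/m$.

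It remains to show that $\Zs$ near $\xxc$ is genuinely a perturbation of those $2m$ rays. Since $\stf$ is real-analytic near $\xxc=0$, write $\stf(x,y)=G(z,\bar z)$ with $z=x+iy$, where $G(z,w)$, holomorphic near the origin in two independent complex variables, is real on the totally real slice $w=\bar z$; its leading homogeneous part is $az^m+\bar a\,w^m$ with $a\neq0$, being $P_m$ rewritten in $z,w$ (the shape is forced by harmonicity, which also makes all lower-degree parts vanish). As $a\neq0$, $G$ is regular of order $m$ in $z$, so the Weierstrass preparation theorem gives $G=U\cdot W$ near the origin with $U(0)\neq0$ and $W=z^m+b_{m-1}(w)z^{m-1}+\dots+b_0(w)$ monic in $z$, $b_j$ holomorphic with $b_j(0)=0$; matching degrees shows $U(0)=a$ and the degree-$m$ part of $W$ is $z^m+(\bar a/a)w^m$, whose roots $z=\omega_j w$, $\omega_j$ running over the $m$th roots of $-\bar a/a$, are pairwise distinct with $|\omega_j|=1$. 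Distinctness of the leading slopes forces the Puiseux factorization $W=\prod_{j=1}^m\bigl(z-\phi_j(w)\bigr)$ to involve no fractional exponents: each $\phi_j$ is an ordinary convergent power series $\phi_j(w)=\omega_j w+O(w^2)$ with $\phi_j'(0)=\omega_j\neq0$. Hence near the origin $\{G=0\}=\bigcup_{j=1}^m\{z=\phi_j(w)\}$, a union of $m$ smooth complex graphs, pairwise disjoint off the origin.

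Now $\Zs$ near $\xxc$ is the real trace $\{z:\ z=\phi_j(\bar z)\text{ for some }j,\ |z|\text{ small}\}$. The antiholomorphic involution $\sigma(z,w)=(\bar w,\bar z)$ has fixed set the real slice and preserves $\{G=0\}$, hence permutes its $m$ branches; if it swapped branch $j$ with $j'\neq j$, comparing leading coefficients of $\sigma$ restricted to branch $j$ would force $\omega_{j'}\bar\omega_j=1$, i.e.\ $\omega_{j'}=\omega_j$ (as $|\omega_j|=1$), contradicting distinctness — so $\sigma$ fixes each branch, acting in the $w$-parameter as the antiholomorphic involution $w\mapsto\overline{\phi_j(w)}$ of a disc fixing $0$. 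Its fixed set is a real-analytic arc through $0$, whose image under the local biholomorphism $\phi_j$ is a real-analytic curve through $\xxc$; these $m$ curves, one per branch, exhaust $\Zs$ near $\xxc$ (no stray components, since $\{G=0\}$ near the origin is exactly those $m$ branches). The fixed arc has tangent line $\{\beta:\,e^{2i\beta}=\bar\omega_j\}$ at $0$, and $\phi_j$ (derivative $\omega_j$ at $0$) rotates this to $\{\pola:\,e^{2i\pola}=\omega_j\}$ — a pair of antipodal directions, which exists precisely because $|\omega_j|=1$. So each of the $m$ curves passes through $\xxc$ with an antipodal pair of half-arcs, and since $\omega_0,\dots,\omega_{m-1}$ are the $m$ distinct $m$th roots of $-\bar a/a$, the $2m$ half-arc directions are $\tfrac12\arg\omega_0+k\pi/m$, $k=0,\dots,2m-1$: precisely the equiangular rays of $\{P_m=0\}$, spaced by $\pi/m$, with distinct tangent lines making the curves pairwise disjoint near $\xxc$ except at $\xxc$. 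This settles the fluid-point case, $\xxc$ being a vertex exactly when $m\ge2$. At a smooth boundary point the same analysis applies to the analytic extension on $N$: one of the $m$ curves is $\Gamma$ itself (an analytic curve through $\xxc$ inside $\{\stf=0\}$, hence one of the $m$ irreducible pieces), giving the two half-arc directions along the boundary; each of the remaining $m-1$ curves, whose tangent line differs from $\Gamma$'s, has one half-arc on the fluid side $\Dom$ and its antipode on the solid side, so $\Zs$ near $\xxc$ consists of those $m-1$ analytic half-arcs, and together with the two boundary tangents they are $m+1$ consecutive rays spaced by $\pi/m$; $\xxc$ is a vertex exactly when $m-1\ge1$.

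The step I expect to be the main obstacle is this last passage from the complex variety $\{G=0\}$ to the real locus $\Zs$: certifying that each complex branch meets the totally real slice in exactly one analytic antipodal pair of arcs tangent to one of the equiangular rays, with no spurious components. This is exactly where ellipticity earns its keep — harmonicity of $P_m$ makes the $m$ complex branches smooth graphs with \emph{unit-modulus, pairwise distinct} slopes $\omega_j$; the unit modulus guarantees the real solutions $e^{2i\pola}=\omega_j$ exist, so the branches are $\sigma$-invariant and do meet the real slice in genuine arcs, while distinctness rules out Puiseux branching and makes the $2m$ tangent directions all different. Without the PDE one would have to confront general real-analytic curve singularities, fractional exponents, and no control on the angles. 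A smaller point to handle carefully in the boundary case is that the analytic extension of $\stf$ really does solve \eqref{eq:stf-2d} across $\Gamma$, and that vanishing of the velocity at a boundary vertex normalizes the principal symbol to $I$ there — which is what makes the $2m$ rays equiangular in the correct Euclidean metric, with no coordinate straightening that would distort angles.
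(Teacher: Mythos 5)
Your proof is correct, but it reaches the branch decomposition by a genuinely different route than the paper. Both arguments share the same key input: at a zero of order $m\geq 2$ the velocity vanishes, the principal part of \eqref{eq:stf-2d} reduces to the Laplacian at that point, and the leading homogeneous Taylor term is therefore a nonzero planar harmonic form $\Re(cz^m)$ whose zero set is $2m$ equiangular rays. (You extract this by matching lowest-degree terms, using that the coefficients are $I+O(|\nabla\stf|^2)$ with $\nabla\stf$ vanishing to order $m-1$; the paper gets the same normal form via the strong maximum principle plus a recursion on the $m$-th order derivatives --- your derivation is marginally more direct and avoids the maximum principle at this step.) From there the paper stays entirely real: it rotates so no zero ray is vertical, performs the blow-up $s=y/x$ to get $h(x,s)=a_3\prod_k(s-s_k)+xR(s)$, and applies the real-analytic implicit function theorem at the simple roots $s_k$, immediately producing the $m$ graphs $y=\hat s_k(x)\,x$. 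You instead complexify, invoke Weierstrass preparation, use distinctness of the tangent-cone slopes to rule out fractional Puiseux exponents, and then recover the real locus as the fixed set of the antiholomorphic involution $\sigma(z,w)=(\bar w,\bar z)$ acting on the $m$ complex branches. The paper's route is more elementary and self-contained; yours costs more machinery but makes explicit \emph{why} each complex branch contributes exactly one real antipodal pair of arcs --- the unit modulus and pairwise distinctness of the slopes $\omega_j$, both consequences of ellipticity --- and it would adapt more readily to settings where one needs to track the complexified variety. Your handling of the boundary case (analytic odd-reflection extension satisfying the PDE by the identity theorem, the boundary arc $\Gamma$ being identified with one of the $m$ branches, and transversality splitting each remaining branch into one fluid and one solid half-arc) matches the paper's in substance. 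The only steps you assert rather than prove --- that distinct tangent lines preclude Puiseux branching, and that an antiholomorphic involution with a fixed point has a real-analytic arc as fixed set --- are standard and correct, so I see no gap.
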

\begin{proof}[Proof of Proposition \ref{prop:vertex}]
  Let $\stf=0$ in some fluid or smooth boundary point (in the latter case we focus on one fluid side of the boundary).
  Let the point be $0$, by a simple translation of coordinates. 
  Let $m$ be minimal so that $D^m\stf(0)\neq 0$. 
  (Such an $m$ exists because we showed $\stf$ is analytic and cannot be locally zero.)
  
  By the strong maximum principle (Proposition \ref{prop:strong-maxp}) 
  $\stf$ and hence the dominant degree $m$ homogeneous part of its Taylor polynomial cannot be single-signed in a neighbourhood of $0$.
  Thus by homogeneity the degree $m$ part must be zero on some line.
  We may rotate coordinates so that the line coincides with the $x$ axis. 

  Consider the case $m\geq 2$. 
  Taking $\px^j\py^{m-2-j}$ ($0\leq j\leq m-2$) of the equation
  \begin{alignat*}{5} 0 = (I-\ssnd^{-2}\vv^2):\nabla^2\stf \end{alignat*} 
  yields, using $\nabla^j\stf(0)=0$ for $j=1,...,m-1$ and hence $\vv(0)=0$, that
  \begin{alignat*}{5} 0 = \px^{j+2}\py^{m-2-j}\stf + \px^j\py^{m-j}\stf \quad\text{in $0$.} \end{alignat*} 
  Hence, because $\px^m\stf(0)=0$ by our rotation above, 
  \begin{alignat*}{5} \px^j\py^{m-j}\stf(0)=0 \quad\text{for \emph{even} $j$,} \end{alignat*}
  and similarly 
  \begin{alignat*}{5} \px^j\py^{m-j}\stf(0) = (-1)^{(j-1)/2} a \quad\text{for \emph{odd} $j$,} \end{alignat*}
  where $a$ must be nonzero.
  Combined we obtain a Taylor expansion (with $z=x+iy$)
  \begin{alignat*}{5} \stf
  = a_2 \Im ( z^m ) + O(|z|^{m+1}) \end{alignat*}
  for some nonzero $a_2$. For $m=1$ the same expansion holds trivially after rotation. 
  The zeros of the leading term $a_2\Im(z^m)$ are obviously 
  \begin{alignat*}{5} z_k=t\exp\frac{k\pi i}{m} \quad\text{for $k=1,...,m$ and $t\in\R$,} \end{alignat*}
  lines at equal angles $\pi/m$. 

  We rotate coordinates again slightly so that none of these lines has vertical tangent. Let $s_1<...<s_m$ be the slopes, then 
  \begin{alignat*}{5} \stf = a_3\cdot(y-s_1x)\cdot...\cdot(y-s_mx) + O(|\xx|^{m+1}) , \end{alignat*} 
  for some nonzero $a_3$. For $x\neq 0$ a division by $x^m$ yields with $s=y/x$ that 
  \begin{alignat*}{5} \stf x^{-m} = \subeq{ a_3\cdot(s-s_1)\cdot...\cdot(s-s_m) + xR(s) }{\eqdef h(x,s)} \csep \text{$R(s)$ real-analytic.} \end{alignat*} 
  $\stf=0$ is then equivalent to $0=h(x,s)$ with $h$ analytic and satisfying $\partial h/\partial s(0,s_k)\neq0$ for every $k=1,...,m$.
  Hence the implicit function theorem (e.g.\ \cite[Theorem 2.3.5]{krantz-parks}) 
  shows that there are real-analytic functions $\hat s_k$, defined for $x$ near $0$ with $\hat s_k(0)=s_k$, 
  so that $\stf(x,\hat s_k(x))=0$. Each $\hat s_k$ parametrizes one pair of the desired curves (in the boundary case we retain only the $m-1$ curves 
  on the fluid side we chose to consider). 
\end{proof}

\newcommand{\axx}{a^{xx}}
\newcommand{\axy}{a^{xy}}
\newcommand{\ayy}{a^{yy}}
\newcommand{\maxr}{\overline r}
\newcommand{\subsocoeffeps}{\delta}
\newcommand{\subsoexpeps}{\eps}
\newcommand{\polamid}{\pola_c}

We have obtained rather detailed information about the zero streamline away from the corners. 
Structure near the corners could be clarified after obtaining regularity results there. 
However, it is actually possible to bypass corner regularity analysis altogether. 
To this end we note an important observation about protruding corners (see fig.\ \ref{fig:pacman}):
\begin{proposition}
  \label{prop:singlesign}%
  For a compressible uniformly subsonic nonzero potential flow, 
  we cannot have $\stf\geq 0$ (or $\stf\leq 0$) in a pacman.
  This is also true for incompressible nonzero potential flows if their velocity is bounded near the corner.
\end{proposition}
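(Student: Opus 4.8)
The plan is to argue by contradiction: suppose $\stf\geq 0$ throughout a pacman $P$ centered at a protruding corner $\aa_c$, with opening angle $\polahi-\polalo>\pi$. Since $\stf=0$ on the two straight edges of the pacman that lie on the slip boundary (by the slip condition, \eqref{eq:stf-const}, normalized to zero), and $\stf\geq 0$ inside, the function $\stf$ attains its minimum value $0$ along those radial boundary segments. The strategy is to convert this into a contradiction with the Hopf boundary-point lemma along a smooth portion of one of the two edges. Concretely, pick an interior point $\xx_0$ of one of the analytic boundary arcs adjacent to the corner — such a point exists and the boundary is analytic, hence $C^{1,1}$, there — and note that near $\xx_0$ the domain satisfies an interior ball condition from the fluid side. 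The PDE \eqref{eq:stf-2d} is uniformly elliptic (uniform subsonicity, hypothesis 1 of Definition \ref{def:setting}) with analytic, hence Hölder, coefficients, so the Hopf lemma \cite[Lemma 3.4]{gilbarg-trudinger} applies: if $\stf>0$ in the interior near $\xx_0$ and $\stf(\xx_0)=0$ is a minimum, then the outward normal derivative is strictly negative, i.e.\ the inward normal derivative is strictly positive, so $\nabla\stf(\xx_0)\neq 0$.

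That last conclusion is not yet a contradiction, so the real work is to exploit the \emph{corner} itself rather than a smooth boundary point. Here is the mechanism I expect to use. Because $\stf\geq 0$ and $\stf=0$ on both radial edges, and because $\stf$ is not locally zero (Proposition \ref{prop:base-regularity} and the remark after it), the vertex analysis is not directly available at the corner, but we can use Proposition \ref{prop:vertex} at the two smooth endpoints of a small subarc — actually the cleaner route is a comparison argument. On the pacman $P$, which contains an open sector of angle $>\pi$, compare $\stf$ with a suitable positive harmonic-type barrier. In polar coordinates $(\rad,\pola)$ centered at $\aa_c$ with the sector being $\polalo<\pola<\polahi$, the function $\rad^{\pi/(\polahi-\polalo)}\sin\!\big(\tfrac{\pi(\pola-\polalo)}{\polahi-\polalo}\big)$ vanishes on both edges and is positive inside; since $\polahi-\polalo>\pi$ the exponent $\pi/(\polahi-\polalo)<1$. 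For the \emph{incompressible} case ($\Lap\stf=0$) this barrier is exactly harmonic, and a boundary Harnack / maximum-principle comparison on a small sector forces $\stf(\xx)\geq c\,\rad^{\pi/(\polahi-\polalo)}$ near the corner with $c>0$ (one gets $c>0$ precisely because $\stf$ is positive somewhere inside, using the strong maximum principle, Proposition \ref{prop:strong-maxp}); this lower bound makes $|\nabla\stf|=|\dens\vv|$ blow up like $\rad^{\pi/(\polahi-\polalo)-1}\to\infty$, contradicting the hypothesis that the velocity is bounded near the corner. For the \emph{compressible} case the frozen-coefficient version of the same sector barrier works after an affine change of variables diagonalizing the (uniformly elliptic) principal part at the corner — the affine map sends the sector to another sector of angle still exceeding $\pi$ provided the angle excess is robust, which it is since the ellipticity constants are uniform; then one runs the same comparison and concludes $|\nabla\stf|\to\infty$, but \emph{that} is impossible for a compressible flow because the Bernoulli relation \eqref{eq:pividens} caps $|\vv|$ below the limit speed — which is exactly why the compressible statement needs no boundedness hypothesis.

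The main obstacle, I expect, is making the comparison argument rigorous near the corner when the coefficients of \eqref{eq:stf-2d} are only known to be analytic in the \emph{open} fluid domain and H\"older up to smooth boundary arcs, but are a priori uncontrolled at $\aa_c$ itself, where $\stf$ need not even be $C^1$. One has to either (i) work on a punctured pacman $P\setdiff\cball{\eps}{\aa_c}$ and let $\eps\dnconv 0$, carrying a barrier with the right boundary values on the arc $\rad=\eps$ and controlling it uniformly, or (ii) first establish enough local regularity / a Phragmén–Lindelöf-type dichotomy at the corner to justify freezing coefficients. Option (i) seems cleanest: choose the barrier $v=\rad^{\alpha}\sin(\ldots)$ with $\alpha\in\boi{\pi/(\polahi-\polalo)}{1}$ slightly above the critical exponent so that, after the affine straightening, $(I-\ssnd^{-2}\vv^2){:}\hess v \leq 0$ holds for $\rad$ small (using $\vv\to$ a finite limit or at worst stays bounded in the incompressible-bounded case, and the uniform ellipticity plus $\alpha<1$ to absorb lower-order terms), then $\eps v$ is a subsolution dominated by $\stf$ on the parabolic-type boundary for $\eps$ small, giving $\stf\geq \eps v$ and hence the blow-up. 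A secondary subtlety is that a protruding corner may be one where a zero-thickness part of the body meets, so the ``two edges'' might be two sides of one analytic arc; the argument is unaffected since we only use that $\stf=0$ on the sector's straight boundary and $\stf\geq 0$ inside. I would present the incompressible case first (where the barrier is literally harmonic) and then indicate the affine-freezing modification for the compressible case.
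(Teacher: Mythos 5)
Your overall architecture --- bound $\stf$ from below by a barrier $\rad^{\alpha}f(\pola)$ with $\alpha<1$ vanishing on the two radii, conclude $\stf\geq c\,\rad^{\alpha}$ along a middle ray, and contradict boundedness of $\nabla\stf$ --- is exactly the paper's, and your incompressible half is essentially sound. The compressible half, however, has a genuine gap at the step ``affine change of variables diagonalizing the principal part at the corner.'' The coefficient matrix $I-(\vv/\ssnd)^{2}$ of \eqref{eq:stf-2d} is a function of $\nabla\stf$, and nothing guarantees that $\nabla\stf$ has a limit at the corner; obtaining such corner regularity is precisely what the paper is trying to avoid. So there is no well-defined frozen operator at $\aa_c$, and if you freeze at a nearby point the discrepancy $A(\xx)-A_0$ is $O(1)$, not $o(1)$, as $\rad\to0$; it acts on $\hess v=O(\rad^{\alpha-2})$, i.e.\ at the same order as your main term. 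Taking $\alpha$ slightly above the critical exponent $\alpha_c=\pi/(\polahi-\polalo)$ does not close this: the slack you gain is $(\alpha^{2}-\alpha_c^{2})\,g(\pola)\,\rad^{\alpha-2}$ with $g=\sin(\alpha_c(\pola-\polalo))$, which vanishes at $\pola=\polalo,\polahi$, whereas the perturbation contains the cross term $2\axy(\alpha-1)g'(\pola)\rad^{\alpha-2}$, which does not vanish there (at the edges $g=0$ but $g'=\pm\alpha_c$). Hence the subsolution inequality fails near the edges and the comparison principle cannot be applied.

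The paper's barrier is built to dodge exactly this: exponent $1-\eps$ with $\eps$ small (far from the critical sector exponent) and angular profile $f=1+a\cos(\pola-\polamid)$, so that $f+f_{\pola\pola}\equiv1>0$ uniformly on the sector. The subsolution inequality then reduces to $\ayy\cdot 1\geq\eps\cdot(\text{bounded terms})$, which holds for small $\eps$ using only the pointwise ellipticity bounds $|\axx|,|\axy|\leq C\ayy$ --- no freezing, no limit of $\vv$ at the corner, and any opening angle in $\boi{\pi}{2\pi}$ is reachable because the zeros of $f$ can be placed up to $\pi$ on either side of its maximum. Two smaller points. First, the straight radii of a pacman do not in general lie on the body boundary (the boundary consists of analytic arcs, not radii), so your claim that $\stf=0$ there by the slip condition is false; your comparison survives only because the barrier itself vanishes on the radii while $\stf\geq0$. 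Second, to get a strictly positive comparison constant on the outer arc you must shrink the pacman slightly (keeping its angle above $\pi$) so that its closure minus the center lies where $\stf>0$, since both $\stf$ and the barrier may vanish at the arc's endpoints.
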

In particular every corner must be in the closure of $\Zs$. 
\begin{proof}
  \begin{enumerate}
  \item
    Our flow corresponds to a stream function $\stf$ solving $L\stf=0$ with operator
    \begin{alignat*}{7} L = -A(\xx):\nabla^2 = -\axx(\xx)\pd x^2 - 2\axy(\xx)\pd x\pd y - \ayy(\xx)\pd y^2 \end{alignat*}
    uniformly elliptic on the pacman $U=\set{\polalo<\pola<\polahi,\ 0<\rad<\maxr}$ 
    (with $\maxr>0$, $\polahi-\polalo\leq 2\pi$) in polar coordinates $(\rad,\pola)$ centered in the protruding corner. 
    Protruding means $\polahi-\polalo$ is greater than $\pi$.

    We claim existence of a subsolution $\subso$ with
    \begin{alignat}{7} \subso &\geq \iota\rad^{1-\eps} \quad\text{on some ray $\set{\pola=\polamid,\ 0<\rad<\maxr}$}  \label{eq:infgrad}\end{alignat} 
    for some constants $\eps\in\boi01$, $\iota>0$ and $\polamid\in\boi{\polalo}{\polahi}$.
    More precisely, $\subso$ is $\Ctwo$ in the pacman $U$ and continuous on its closure, with
    \begin{alignat}{7} 
      L\subso &\leq 0 \quad\text{in the pacman, and} \label{eq:subsolprop} \\
      \subso &\leq 0 \quad\text{on the radii $\set{\pola=\pola_q,\ 0\leq\rad\leq\maxr}$ for $q=0,1$.} \label{eq:zeroonradii}
    \end{alignat}
    We obtain $\subso$ by the ansatz
    \begin{alignat*}{7} \subso(\rad,\pola) &= \rad^{1-\eps} f(\pola). \end{alignat*} 
    At $\pola=0$, $L\subso\leq 0$ is 
    \begin{alignat*}{7} 0 
      &\leq 
      \big( \axx\pr^2 + 2\axy\rad^{-1}(\pr-\rad^{-1})\po + \ayy(\rad^{-2}\po^2+\rad^{-1}\pr) \big) \subso
      \\&=
      \rad^{-1-\eps} \Big( \ayy ( f + f_{\pola\pola} ) -\eps \big( \axx (1-\eps) f  + 2\axy f_\pola + \ayy f \big) \Big)
    \end{alignat*} 
    and same at other $\pola$ if the $A$ coefficients are rotated accordingly. 
    To satisfy the inequality \eqref{eq:subsolprop} it is sufficient to solve $f+f_{\pola\pola}=1$ 
    and then take $\eps>0$ sufficiently small, 
    using $|\axx|,|\axy|\leq C\ayy$ 
    for some constant $C<\infty$ independent of $\pola$, by uniform ellipticity. 
    The solutions are
    \begin{alignat*}{7} f=1+a\cos(\pola-\polamid) \end{alignat*}
    We pick $a\geq 1$ so that on each side of the maximum $\pola=\polamid$ we have zeros in 
    \begin{alignat*}{5} \pola = \polamid \pm \tilde\pola \csep \tilde\pola = \arccos\frac{-1}{a}=\pi-\arccos\frac1a. \end{alignat*}
    The distance $\tilde\pola$ ranges from $\pi$ to arbitrarily close to but larger than $\pi/2$ as $a$ ranges from $1$ to $\infty$.
    Hence we may take $\polamid=\half(\polalo+\polahi)$ and $a$ so that $\polamid-\tilde\pola=\polalo$ while $\polamid+\tilde\pola=\polahi$. 
    Then $f=0$ in $\pola=\polalo$ and in $\pola=\polahi$ so that \eqref{eq:zeroonradii} holds, and $f>0$ for $\pola\in\boi{\polalo}{\polahi}$,
    in particular in $\pola=\polamid$, so that \eqref{eq:infgrad} holds. 
  \item
    Now we apply a comparison principle argument. 
    Assume, contrary to our claim, that $\stf\geq 0$ on the closure of the pacman.
    Then by the strong maximum principle $\stf>0$ in its interior.
    We may shrink the pacman slightly, keeping its interior angle greater than $\pi$, so that $\stf>0$ on the closure of the pacman except in the center. 

    $\stf>0$ on the compact arc $\set{\polalo\leq\pola\leq\polahi,\ \rad=\maxr}$
    where $\subso$ and $\stf$ are continuous, hence $\stf$ is uniformly positive and $\subso$ uniformly bounded there,
    so by taking $\tilde\iota>0$ sufficiently small we have $\tilde\iota\subso\leq\stf$ on the arc.
    That holds on the entire boundary of the pacman because on the two radii we have $\tilde\iota\subso\leq 0$ by construction as well as $\stf\geq0$ (in fact $>0$ except in the center endpoint). 
    Moreover in the interior $L(\tilde\iota\subso)\leq 0$ and $L\stf\geq 0$ (in fact $=0$), so the comparison principle implies $\stf\geq\tilde\iota\subso$ on the pacman closure. 

    In particular 
    $\stf\geq \tilde\iota\iota\rad^{1-\eps}$ on the $\pola=\polamid$ ray from the body corner (pacman center) where $\stf=0$, 
    which contradicts boundedness of $\nabla\stf$. 
    The contradiction shows our assumption that $\stf\geq 0$ was wrong. The case $\stf\leq 0$ is analogous.
  \end{enumerate}
\end{proof}

\section{Global structure of the body streamline}
\label{section:global}

Combined with the known asymptotics at infinity we immediately obtain the following consequence:
\begin{proposition}
  \label{prop:vvinonzero}
  If there is at least one protruding corner, then nonzero flows must have $\vvi\neq 0$. 
\end{proposition}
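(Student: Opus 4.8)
The plan is a proof by contradiction: assume the flow is nonzero but $\vvi = 0$, and contradict Proposition \ref{prop:singlesign}.

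First I would record what Proposition \ref{prop:stf-infinity} gives when $\vvi = 0$. Then $\vxi = 0$ and $\Machi = \vlen{\vvi}/\ssndi = 0$, so $\pglau = \sqrt{1-\Machi^2} = 1$ and the asymptotics reduce to
\begin{alignat*}{5} \stf(\xx) = -\densi\frac{\Gam}{2\pi}\log\vlen\xx + \const + o(1) \qquad\text{as $\vlen\xx\conv\infty$,} \end{alignat*}
with $o(1)$ derivative. Hence $\stf$ has a limit in $[-\infty,+\infty]$ at infinity: since $\densi > 0$, it is $+\infty$ if $\Gam < 0$, $-\infty$ if $\Gam > 0$, and a finite constant $\stfi$ if $\Gam = 0$. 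Combined with the slip condition, which makes $\stf$ (extended by $0$ to $\Slipb$, corners included) continuous on $\cDom$, this exhibits $\stf$ as a continuous $[-\infty,+\infty]$-valued function on the compact space $\cDom \cup \set\infty$, so it attains its supremum and infimum there.

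Next I would case-split. If $\Gam > 0$, then $\stf \conv -\infty$ at infinity, so $\sup_{\cDom\cup\set\infty}\stf$ is finite and attained at a finite point; by Proposition \ref{prop:strong-maxp} it cannot be attained at an interior fluid point, so it is attained on $\Slipb$ and equals $0$. Thus $\stf \leq 0$ on all of $\Dom$, in particular $\stf \leq 0$ throughout a pacman at any protruding corner, contradicting Proposition \ref{prop:singlesign}. The case $\Gam < 0$ is symmetric and gives $\stf \geq 0$ on $\Dom$. If $\Gam = 0$ and $\stfi \neq 0$, applying the same reasoning to whichever of $\sup\stf$, $\inf\stf$ has the sign opposite to $\stfi$ again forces $\stf$ to be single-signed on $\Dom$, contradicting Proposition \ref{prop:singlesign}. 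Finally, if $\Gam = 0$ and $\stfi = 0$, then $\stf \conv 0$ both at infinity and on $\Slipb$, so any nonzero value of $\sup\stf$ or $\inf\stf$ would be an interior local extremum, excluded by Proposition \ref{prop:strong-maxp}; hence $\stf \equiv 0$, contradicting that the flow is nonzero.

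The proposition is thus essentially a corollary of Propositions \ref{prop:strong-maxp}, \ref{prop:singlesign} and \ref{prop:stf-infinity}, and I anticipate no genuine obstacle; the one mildly delicate branch is $\Gam = \stfi = 0$, where the protruding corner plays no role and the contradiction comes purely from two-sided decay of $\stf$ and the strong maximum principle. For incompressible flows the same asymptotics hold with $\densi$ a positive constant and $\pglau = 1$, and the bounded-velocity hypothesis needed to invoke Proposition \ref{prop:singlesign} is built into Definition \ref{def:setting}, so the argument carries over verbatim.
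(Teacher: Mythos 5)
Your proof is correct and follows essentially the same route as the paper: plug $\vvi=0$ into the asymptotics of Proposition \ref{prop:stf-infinity}, use the strong maximum principle to show that a nonzero dominant term ($\log$ or constant) forces $\stf$ to be single-signed on $\Dom$, contradict Proposition \ref{prop:singlesign}, and in the remaining case conclude $\stf\equiv 0$. The paper phrases this as successively eliminating the $\log$ and constant terms rather than as an explicit case split on the sign of $\Gam$ and $\stfi$, but the two are logically the same argument.
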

\begin{proof}
  Assume $\vvi=0$, then 
  \begin{alignat*}{5} \stf \topref{eq:stf-infinity}{=} - \frac{\densi\Gam\pglau}{2\pi}\log\sqrt{x^2+(\pglau y)^2} + \const + o(1) \end{alignat*} 
  The $\log$ is positive near infinity where it dominates the $\const$ and $o(1)$ terms, 
  so if its coefficient is nonzero, then $\stf$ is single-signed near infinity. 
  Since $\stf=0$ at the body, the strong maximum principle implies it is single-signed everywhere.
  But that contradicts Proposition \ref{prop:singlesign} since we assumed presence of protruding corners. Hence the coefficient is zero and the $\log$ term disappears.

  The same argument shows that the now-dominant ``$\const$'' term is also zero. The last term vanishes at infinity,
  so the strong maximum principle shows that $\stf=0$, so that the flow is zero. 
\end{proof}

Another key consequence of the strong maximum principle is the following:
\begin{proposition}
  \label{prop:no-jordan}%
  For a nonzero flow, 
  \begin{enumerate}
  \item $\Zs$ does not contain simple closed curves. 
  \item A simple curve in $\Zs$ cannot have both ends converging to the body.
  \end{enumerate}
\end{proposition}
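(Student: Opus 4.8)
The plan is to reduce both statements to a single maximum-principle observation and then supply, in each case, the region to which it applies. The observation is: \emph{if $D\subseteq\Dom$ is a nonempty bounded open set with $\bdry D\subseteq\Zs\cup\Slipb$, then $\stf\equiv0$ on $D$} --- which is impossible for a nonzero flow, since by analyticity (Proposition \ref{prop:base-regularity}) $\stf$ cannot be locally zero. To prove the observation I would first note that $\stf$ extends continuously to $\closure D$ with boundary value $0$: at a boundary point lying on $\Zs$ this is because $\stf$ is continuous on $\Dom$ and vanishes on $\Zs$, and at a boundary point on $\Slipb$ it is exactly the slip condition of Definition \ref{def:setting}, which is imposed at \emph{every} boundary point, corners included. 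Since $\closure D$ is compact, $\stf$ attains a maximum and a minimum on it; if either were attained at an interior point (a fluid point), Proposition \ref{prop:strong-maxp} would force the flow to be zero, so both are attained on $\bdry D$, where $\stf=0$, whence $\stf\equiv0$ on $D$.

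For part 1, let $J\subseteq\Zs$ be a simple closed curve and $I(J)$ its bounded complementary region (Jordan curve theorem). Since $\Body$ is connected and disjoint from $J\subseteq\Dom$, it lies entirely in one complementary component of $J$. If $\Body$ lies in the unbounded component, I take $D=I(J)$: then $\closure D\subseteq\Dom$ and $\bdry D=J\subseteq\Zs$. If $\Body\subseteq I(J)$, I take $D=I(J)\setdiff\Body$, which is open and bounded; it is nonempty, since otherwise $J=\bdry I(J)\subseteq\closure{I(J)}\subseteq\Body$ would contradict $J\subseteq\Dom$; and any boundary point of $D$ is either a point of $J$, or a point of $I(J)$ lying in $\Body$ that is a limit of points of $\Dom$ and hence lies in $\bdry\Body=\Slipb$, so $\bdry D\subseteq J\cup\Slipb$. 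In either case the observation applies and gives the contradiction.

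For part 2, let $\gamma\subseteq\Zs$ be a simple curve whose two ends converge to body points $P_0,P_1\in\Slipb$ (if an end only accumulates on $\Body$ without converging, I would first replace $\gamma$ by a subarc ending at one of its accumulation points; nothing below changes). Since $\gamma\subseteq\Dom$ is disjoint from $\Body$, the arc $\closure\gamma=\gamma\cup\set{P_0,P_1}$ meets $\Body$ only in $\set{P_0,P_1}$. Being a piecewise analytic body, $\Body$ is compact, connected and locally connected, hence arc-connected, so I can choose a simple arc $\beta\subseteq\Body$ from $P_0$ to $P_1$; then $\beta$ and $\closure\gamma$ have exactly the endpoints $P_0,P_1$ in common, so $C:=\closure\gamma\cup\beta$ is a Jordan curve (if $P_0=P_1$, then $\closure\gamma$ is already a Jordan curve and I set $C=\closure\gamma$). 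With $I(C)$ the bounded complementary region of $C$, put $D:=I(C)\setdiff\Body$: this is open and bounded; it is nonempty, because $I(C)\subseteq\Body$ would force $\closure\gamma\subseteq C\subseteq\closure{I(C)}\subseteq\Body$, impossible; and $\bdry D\subseteq C\cup\bdry\Body$, with every boundary point of $D$ lying in $C=\closure\gamma\cup\beta$ being on $\gamma$, on $\set{P_0,P_1}\subseteq\Slipb$, or on $\beta$, and in the last case (being a limit of points of $\Dom$) lying in $\bdry\Body=\Slipb$; thus $\bdry D\subseteq\Zs\cup\Slipb$, and the observation again yields the contradiction.

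I expect no serious obstacle: the argument is a clean application of the strong maximum principle once the right region is identified, and the remaining work is the routine point-set verifications that these regions are nonempty and have boundary inside $\Zs\cup\Slipb$ (using the Jordan curve theorem and arc-connectedness of $\Body$). The one place where something might seem to go wrong --- the ends of $\gamma$ approaching a corner, where we have no corner regularity --- causes no trouble, since the only thing the observation needs there is that $\stf$ extend continuously to $\closure D$ with zero boundary data, and this is built into the definition of a potential flow through the slip condition, which is asserted at every boundary point, corners included.
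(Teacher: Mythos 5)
Your proposal is correct and follows essentially the same route as the paper: close the curve through the (path-connected) body if necessary, apply the Jordan curve theorem, and contradict the strong maximum principle on the bounded enclosed fluid region, whose boundary lies in $\Zs\cup\Slipb$ and hence carries zero data (by definition of $\Zs$ and by the slip condition, corners included). The only difference is organizational --- you isolate the maximum-principle step as a stand-alone observation and treat the two parts separately, whereas the paper treats the closed-curve case as a special case of the body-to-body case --- but the substance is identical.
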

\begin{figure*}
  \input{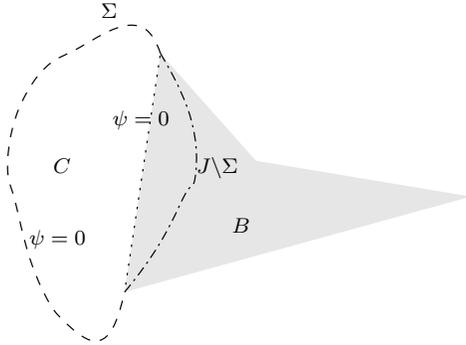}
  \caption{An $\stf=0$ curve connecting two body points (here corners) contradicts the strong maximum principle}
  \label{fig:bodycurve}
\end{figure*}
\begin{proof}
  If a simple curve $\Sigma$ in $\Zs$ converges to body points at both ends (see fig. \ref{fig:bodycurve}), then we may extend it to a simple closed curve $J$
  through the (path-connected) body $\Body$. 
  The case where $\Zs$ contains a simple closed curve $\Sigma=J$ (which may or may not enclose $\Body$) is a special case. 
  
  By the Jordan curve theorem
  $\R^2\setdiff J$ has two open connected components,
  one unbounded, the other bounded, each having boundary $J$. Call the bounded component $C$;
  it must contain fluid points since the $\Sigma$ part of its boundary $J$ does, so $C\isect\Dom$ is nonempty.
  Every point on $\bdry(C\isect\Dom)$ is either a point of the solid boundary $\bdry\Dom=\bdry\Body$, where $\stf=0$ by slip condition, 
  or a point on $\bdry C=J$ away from $\Body$ and hence on the original curve $\Sigma\subset\Zs$ where $\stf=0$ by definition of $\Zs$.
  Hence $\stf$ must attain its extremum over $\overline{C\isect\Dom}$ in the interior $C\isect\Dom$ --- contradiction to the strong maximum principle. 
\end{proof}
That connected components $\Zs$ cannot contain cycles is a strong constraint.
In essence it reduces $\Zs$ to a ``tree''\footnote{standard definitions of ``embedded graph'' do not quite apply without proving more regularity at the corners}.
In our particular case the tree reduces further to a single curve, but the techniques are also useful in other applications where the trees may be more complex.

\begin{proposition}
  \label{prop:single-curve}%
  If $\vvi\neq 0$, then $\Zs$ is either
  \begin{enumerate}
  \item[a.]
    a single analytic curve from negative to positive infinity that does not meet the body, or
  \item[b.]
    a union of two disjoint analytic curves, one from each infinity converging at the other end to a unique body point (possibly the same). 
  \end{enumerate}
\end{proposition}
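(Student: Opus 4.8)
The plan is to extract a global picture of $\Zs$ from the local structure (Propositions~\ref{prop:curves-at-inf} and~\ref{prop:vertex}), the acyclicity of $\Zs$ (Proposition~\ref{prop:no-jordan}) and the strong maximum principle (Proposition~\ref{prop:strong-maxp}), in three stages. \emph{Step~1 (two sides).} Since $\vvi\neq0$, Proposition~\ref{prop:stf-infinity} gives $\stf_y>0$ in a neighbourhood of infinity, so there $\set{\stf>0}$ and $\set{\stf<0}$ each have a single connected component, call them $U^+$ and $U^-$ (above, resp.\ below, the two curves of Proposition~\ref{prop:curves-at-inf}). Let $P\supseteq U^+$ and $N\supseteq U^-$ be the components of $\Dom\setdiff\Zs$ containing them; then $P\neq N$. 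Any further component $W$ is single-signed and disjoint from $U^\pm$; if $W$ were unbounded it would reach the neighbourhood of infinity and so meet $U^+$ or $U^-$, forcing $W=P$ or $W=N$; hence $W$ is bounded, $\stf$ vanishes on $\bdry W$ (on $\Zs$ by construction, on $\Slipb$ by the slip condition), and $\stf$ attains an interior extremum over the compact set $\overline W$, contradicting Proposition~\ref{prop:strong-maxp}. So $\Dom\setdiff\Zs=P\sqcup N$ with $P=\Ps$ and $N=\Ns$ connected.

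\emph{Step~2 (no fluid vertices).} I claim $\Zs$ has no vertex at a fluid point, i.e.\ $m=1$ there in the notation of Proposition~\ref{prop:vertex}. Suppose a fluid point $\xx_0$ had $m\geq2$: locally $2m\geq4$ sectors alternate between $P$ and $N$; pick two cyclically adjacent $P$-sectors $P_1,P_2$, separated by one local $N$-sector on one side and by $m-1\geq1$ on the other. Join a point of $P_1$ to a point of $P_2$ by a path in the connected set $P$ and close it through $\xx_0$ by two short segments pointing into $P_1$ and $P_2$; after passing to a simple subloop this is a Jordan curve $L$ with $L\setdiff\set{\xx_0}\subseteq P$, and an $N$-sector of $\xx_0$ lies on each of its two sides near $\xx_0$. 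Picking a point in each such $N$-sector, any path between them in the connected set $N$ must cross $L$; but $N\cap(L\setdiff\set{\xx_0})=\emptyset$ and $\xx_0\notin N$ --- contradiction. (The argument is symmetric in $P,N$.) Consequently $\Zs$, being closed in $\Dom$, is an embedded $1$-manifold in $\Dom$; by Proposition~\ref{prop:no-jordan}(1) it has no circle component, so every connected component is a properly embedded copy of $\R$.

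\emph{Step~3 (counting ends).} Each end of such a component leaves every compact subset of $\Dom$, hence its set of limit points in $\R^2\cup\set\infty$ is a nonempty connected subset of $\Slipb\cup\set\infty$; using analyticity of $\stf$ at smooth boundary points (Proposition~\ref{prop:vertex}) and the finiteness of the corner set, this set is a single point --- either $\infty$ or a point of $\Slipb$ --- and if it is $\infty$ the end eventually coincides with one of the two graphs of Proposition~\ref{prop:curves-at-inf}. Thus $\Zs$ has exactly two ``ends at infinity'', one toward $x\to-\infty$ and one toward $x\to+\infty$, and Proposition~\ref{prop:no-jordan}(2) forbids any component with both ends on $\Slipb$. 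If the two infinite ends lie in one component, that component is a single analytic curve from $-\infty$ to $+\infty$, no other component can exist (it would have both ends on $\Slipb$), its closure in the sphere is itself together with $\infty$ and hence misses $\Body$, and we are in case~(a). Otherwise they lie in two distinct components; since each infinite end is one of the two curves near infinity, each such component has exactly one infinite end and therefore its other end on $\Slipb$; no further component can exist; and we are in case~(b), the two terminal body points possibly equal.

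The crux is Step~2: Steps~1 and~3 are essentially topological bookkeeping on top of propositions already in hand, whereas excluding fluid branch points genuinely needs the interplay between the local sector picture of Proposition~\ref{prop:vertex} and the global connectedness of $\Ps,\Ns$ from Step~1, together with a routine but slightly delicate Jordan-curve construction (making the joining path simple and disjoint from the short segments except at their common endpoints). A secondary technical point, in Step~3, is ensuring that each end of a one-dimensional component actually converges rather than oscillating near $\Slipb$; this is precisely where the local analyticity at smooth boundary points and the finiteness of the corner set are used to pin the limit set down to a single point.
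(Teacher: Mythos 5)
Your proof is correct, and it excludes fluid vertices by a genuinely different mechanism than the paper. The paper argues directly from a putative vertex: it continues each of the $2m\geq 4$ local arcs to a maximal simple path, shows by a compactness/covering argument with shrinking neighbourhoods that each path converges to a corner, a smooth boundary point, or one of the two infinities, notes that at most one path can reach each infinity, and concludes that at least two paths end on the body, whose union through the vertex violates Proposition \ref{prop:no-jordan}. You instead first establish the stronger structural fact that $\Ps$ and $\Ns$ are each connected (your Step~1 --- the same maximum-principle argument that underlies Proposition \ref{prop:no-jordan}, here applied to the components of $\set{\stf\neq0}$), and then kill a vertex by a separation argument: a Jordan curve through the vertex lying otherwise in $\Ps$ separates two of the local $\Ns$-sectors, contradicting connectedness of $\Ns$. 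Both routes bottom out in the strong maximum principle; yours buys the additional statement that $\Dom\setminus\Zs$ has exactly two components, at the price of the routine but nontrivial planar surgery needed to make the loop simple with a single arc through the vertex, and of the observation (implicit in your write-up, but immediate from the leading term $a_2\Im(z^m)$ in the proof of Proposition \ref{prop:vertex}) that the $2m$ sectors alternate in sign. Your Step~3 is essentially the paper's concluding part: pinning each end down to a single limit point uses the same finiteness-of-corners and local-boundary-structure reasoning as the paper's neighbourhood argument, and the count of ends at infinity together with Proposition \ref{prop:no-jordan} yields the stated dichotomy.
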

\begin{proof}
  \begin{enumerate}
  \item
    By Proposition \ref{prop:vertex} a curve in $\Zs$ starting in some vertex cannot just stop somewhere, 
    so we can continue it either to another vertex or to infinity or to the solid body. More precisely: 
  consider a unit-speed parametrization $s\mapsto z(s)$ of the local curve starting from that vertex, with arc length $s=0$ in the vertex. 
  Consider the $s>0$ side analytic extension; for the extended $s$ interval $\roi{0}{s_*}$ pick $s_*$ maximal so that 
  $z(s)$ is a non-vertex fluid point for all $s\in\roi{0}{s_*}$. 
  If there is a sequence $(s_n)\upconv s_*$ so that $z(s_n)$ converges to a point in $\Dom$, then 
  by Proposition \ref{prop:vertex} $z(s)$ itself must converge as $s\upconv s_*$, and the limit is in $\Zs$;
  it must be a vertex because in non-vertex points $\Zs$ is locally a single analytic curve, 
  so we could have continued the extension within $\Zs\setdiff\set{\text{vertices}}$,
  contradicting maximality.
  If there is no sequence $(s_n)$ as above, then $z(s)$ must converge either to infinity or to a body point as $s\upconv s_*$. 

  We call such a maximal curve an edge. 
\item
  Consider any connected component $C$ of $\Zs$. 
  Assume $C$ contains a fluid vertex. 
  By Proposition \ref{prop:vertex} there are $m\geq 4$ edges ending in that vertex. 
  Consider one of them. 

  If it converges to another fluid vertex at the other end, Proposition \ref{prop:vertex} permits continuing along another edge, 
  for definiteness say the first one in clockwise direction from the arrival edge.
  We repeat this as many times as possible, finitely many times if the last path edge converges to a body point
  or one of the infinities, infinitely many times otherwise. 
  The resulting path must be simple because Proposition \ref{prop:no-jordan} rules out cycles. 
\item
  Consider a path not ending on the \emph{smooth} part of the body boundary.
  For $\epsilon>0$ consider neighbourhoods $\set{x<-1/\epsilon}$ and $\set{x>1/\epsilon}$ of the two infinities;
  for corner neighbourhoods we choose open balls of radius $\epsilon$ centered in the corner. 
  Definition \ref{def:setting} permits only finitely many corners, so for sufficiently small $\epsilon>0$ 
  the chosen neighbourhoods are pairwise disjoint. 

  Let $\Ue$ be the union of the neighbourhoods; it is open. Then $\Ke\defeq\overline\Zs\setdiff\Ue$ is closed, 
  and by Proposition \ref{prop:curves-at-inf} it is also bounded, hence compact. 
  By Proposition \ref{prop:vertex} every point of $\Ke$ is the center of a ball in which $\Zs$ consists of $2m$ simple analytic curves from that point. 
  A finite number of these balls cover $\Ke$; let $\We$ be their union. 
  
  The edge maximality argument we gave also shows that an edge that meets $\We$, hence meets one of the finitely many balls constituting $\We$, must visit the center of that ball. 
  $\We$ contains only finitely many centers, and the path must visit one of them each time it visits $\We$, so since the path is simple
  it must eventually leave $\We$ and not return. 
  Since the 
  neighbourhoods are pairwise disjoint, 
  the path must eventually enter one of them without leaving again. 

  By taking $\epsilon\dnconv 0$, with neighbourhoods monotonically decreasing, we see that the path must converge to that corner or infinity. 
  \item
  We obtain at least four such paths, meeting only in the original vertex, 
  so by Proposition \ref{prop:curves-at-inf} at most one can converge to each infinity. 
  That leaves at least two others converging to body points at their other ends, 
  and since they have one end in common their union is a simple path from the body back to the body, in contradiction to Proposition \ref{prop:no-jordan}. 

  Hence our assumption was wrong: the connected component $C$ of $\Zs$ does not have vertices, it is a simple analytic curve. 
  If it is bounded, then again both ends converge to the body, causing a contradiction. Hence there are only unbounded components. 

  Since each component curve has an end converging to one infinity, there cannot be more than two. If there are two, then each converges to a body point at the other end 
  (not necessarily to distinct ones).
  If there is one component, it must pass from negative to positive infinity without meeting the body. 
  \end{enumerate}
\end{proof}

\begin{proposition}
  \label{prop:protruding-curve}%
  For compressible uniformly subsonic nonzero potential flows, 
  every protruding corner of the body has at least one $\Zs$-curve converging to it.
  This is also true for nonzero incompressible potential flows if their velocity is bounded in the fluid pacman.
\end{proposition}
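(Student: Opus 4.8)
The plan is to read off the claim from the global trichotomy already in hand. First I would note that the body has a protruding corner, so by Proposition~\ref{prop:vvinonzero} the (nonzero) flow has $\vvi\neq0$; hence Proposition~\ref{prop:single-curve} applies and $\Zs$ is either (a) a single analytic curve running from negative to positive infinity that does not meet the body, or (b) a union of two disjoint analytic curves, one emanating from each infinity and converging at its other end to a body point (the two limit body points possibly coinciding).

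Next I would invoke Proposition~\ref{prop:singlesign}: for a compressible uniformly subsonic nonzero flow --- or for an incompressible nonzero flow whose velocity is bounded in the fluid pacman at the corner --- $\stf$ is not single-signed in any pacman centred at the corner, so $\Zs$ meets every such pacman and therefore the corner belongs to $\overline\Zs$. This rules out alternative (a): there, as established inside the proof of Proposition~\ref{prop:single-curve}, the single curve is vertexless with both ends at the two infinities, and by the edge-maximality argument of that proof (which uses Propositions~\ref{prop:vertex} and~\ref{prop:curves-at-inf}) each maximal edge of $\Zs$ converges at each end to a vertex, a body point, or an infinity; so in alternative (a) the set $\overline\Zs$ is the curve together with the point at infinity and contains no boundary point at all --- contradicting that the corner lies in $\overline\Zs$.

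Hence alternative (b) holds, and then the closure of each of the two curves is that curve together with its two endpoint-limits, one at an infinity and one on $\Body$; consequently $\overline\Zs\cap\Body$ consists of at most the two limit body points supplied by Proposition~\ref{prop:single-curve}. Since the corner is a point of $\Body$ lying in $\overline\Zs$, it must coincide with one of those two limit points, i.e.\ at least one of the two $\Zs$-curves converges to it. The incompressible version follows by exactly the same chain, using that Propositions~\ref{prop:vvinonzero}, \ref{prop:single-curve}, \ref{prop:vertex} and~\ref{prop:curves-at-inf} remain valid for harmonic $\stf$ and that Proposition~\ref{prop:singlesign} covers incompressible flows with velocity bounded near the corner.

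The only delicate point --- the main obstacle --- is the assertion, needed to exclude alternative (a), that in that case $\overline\Zs$ meets $\Body$ in no point whatsoever: this relies on a maximal edge of $\Zs$ being unable to accumulate on $\Body$ except by having one of its two ends converge there, which is precisely the content of the edge-maximality argument in the proof of Proposition~\ref{prop:single-curve} (a unit-speed analytic curve of infinite arclength cannot converge to an interior fluid point, and near $\Body$ or a vertex it must genuinely terminate). Granting that, the rest is a one-line comparison of $\overline\Zs\cap\Body$ with the list of limit body points from Proposition~\ref{prop:single-curve}.
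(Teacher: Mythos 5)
Your proposal is correct and follows essentially the same route as the paper: Proposition \ref{prop:singlesign} forces the corner into $\overline{\Zs}$, and the dichotomy of Proposition \ref{prop:single-curve} (with its "unique body point" conclusion, which is exactly the point you flag as delicate) then identifies that corner as the limit of one of the two curves. Your extra care in invoking Proposition \ref{prop:vvinonzero} first and in spelling out why case (a) and the non-endpoint parts of case (b) cannot contribute body points to $\overline{\Zs}$ is a slightly more explicit rendering of what the paper leaves implicit, not a different argument.
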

\begin{proof}
  By Proposition \ref{prop:singlesign} $|\nabla\stf|$ would be unbounded at the corner
  unless the corner was in the closure of one of the connected components of $\Zs$. 
  If that component is not separated from the body, then by Proposition \ref{prop:single-curve} it must converge to a unique body point, necessarily the corner. 
\end{proof}

Combining Proposition \ref{prop:protruding-curve} and Proposition \ref{prop:single-curve}, we immediately obtain
\begin{theorem}
  \label{th:threecorneranalytic}
  In the situation of Definition \ref{def:setting}: 
  there are no nonzero compressible uniformly subsonic flows, nor nonzero bounded-velocity incompressible flows, around bodies with three or more protruding corners (fig.\ \ref{fig:tripod}).
\end{theorem}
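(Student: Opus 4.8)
The plan is a short proof by contradiction resting entirely on the structural results already assembled. Suppose, contrary to the claim, that there is a nonzero flow --- compressible uniformly subsonic, or incompressible with velocity bounded in each fluid pacman --- around a body $\Body$ whose boundary has at least three protruding corners. Since there is at least one protruding corner, Proposition \ref{prop:vvinonzero} forces $\vvi\neq 0$, so Proposition \ref{prop:single-curve} applies and describes $\Zs$ completely: either (a) $\Zs$ is a single analytic curve running from negative to positive infinity and disjoint from the body, or (b) $\Zs$ is the union of two disjoint analytic curves, one emanating from each infinity and converging at its other end to a single body point (the two limit points possibly coinciding).

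In case (a), $\Zs$ does not meet the body, so no $\Zs$-curve converges to any protruding corner; but Proposition \ref{prop:protruding-curve} guarantees a $\Zs$-curve converging to each protruding corner, of which there is at least one --- a contradiction. In case (b), each of the two curves has a \emph{unique} limit point on $\bdry\Body$, so at most two distinct boundary points occur as limits of $\Zs$-curves. Yet by Proposition \ref{prop:protruding-curve} each of the three or more pairwise distinct protruding corners must be such a limit point, and two slots cannot accommodate three corners. Both cases being impossible, no such flow exists, which is the assertion of the theorem.

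Since the substantive work --- the local structure of $\Zs$ (Proposition \ref{prop:vertex}), the exclusion of cycles via the strong maximum principle (Proposition \ref{prop:no-jordan}), the reduction to one or two curves (Proposition \ref{prop:single-curve}), and the forced attachment of a $\Zs$-curve at each protruding corner (Proposition \ref{prop:protruding-curve}) --- has already been carried out, the only residual points warranting care are bookkeeping. One should confirm that the asymptotics at infinity of Proposition \ref{prop:stf-infinity}, and with them Propositions \ref{prop:vvinonzero}, \ref{prop:curves-at-inf} and \ref{prop:single-curve}, apply to the bounded-velocity incompressible case as well as the compressible one; and one should record explicitly that in case (b) a curve converging to a body point has that point as its sole body limit, so that serving three distinct protruding corners genuinely requires three distinct curves. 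Neither is a real obstacle: the theorem is essentially a corollary of Propositions \ref{prop:single-curve} and \ref{prop:protruding-curve}.
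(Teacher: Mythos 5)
Your proposal is correct and is precisely the argument the paper intends: the theorem is stated as an immediate combination of Propositions \ref{prop:single-curve} and \ref{prop:protruding-curve} (via Proposition \ref{prop:vvinonzero} to secure $\vvi\neq 0$), and your two-case counting of attachment points against three or more protruding corners is exactly that combination spelled out. No gaps.
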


\begin{remark}
  We also obtained some information about the other cases
  (the example figures use Karman-Trefftz profiles with $\nu$ the corner exponent, $\mu$ the $\zeta$-plane circle center and $\alpha$ the velocity angle):
  \begin{enumerate}
  \item
    For two protruding corners, 
    the body streamline has two components, one from negative infinity to one protruding corner and one from the other protruding corner to positive infinity (fig.\ \ref{fig:twocorner}). 
    \cite{elling-twocorner} discusses that such flows generically do not exist, but do exist that in special cases.
  \item
    For a single protruding corner,
    the body streamline consists of two analytic curves, one from negative and one from positive infinity, one ending in the corner and the other in a unique boundary point (possibly also the corner;
    fig.\ \ref{fig:onecorner} center vs.\ left, right). 
    Flows around one-corner bodies generally exist, under various reasonable assumptions, as is already known from classical work (see \cite{finn-gilbarg-uniqueness} and references therein).
  \item
    If there are no protruding corners, then Proposition \ref{prop:vvinonzero} does not apply, so we may have nonzero flows with $\vvi=0$ (if circulation $\Gam$ is nonzero).
    If $\vvi=0$, the body streamline is the empty set (e.g.\ consider the trivial incompressible $\vv=(-y,x)/|\xx|^2$ around a unit disk body). 
    If $\vvi\neq0$, then the body streamline can be 
    \begin{enumerate}
    \item a single analytic curve from negative to positive infinity, not meeting the body (fig.\ \ref{fig:zerocorner} right), or 
    \item two analytic curves, each meeting the body in a unique point (possibly the same; fig.\ \ref{fig:zerocorner} center vs.\ left)
    \end{enumerate}
  \end{enumerate}
\end{remark}

\begin{figure*}[h]
  \noindent%
  \parbox{.45\linewidth}{%
    \input{twocorners.pstex}%
    \caption{Incompressible flow around a two-corner profile:
      $\mu=0$, $\nu\pi=270^\circ$, 
      $\alpha=0^\circ$, $\Gam$ Kutta-Joukowsky value.}
    \label{fig:twocorner}%
  }\hfil\parbox{.45\linewidth}{%
    \input{tripod.pstex_t}
    \caption{$\geq 3$ protruding corners: 
      at least one  is not connected to infinity by a streamline.}
    \label{fig:tripod}%
  }
\end{figure*}
\begin{figure*}[h]
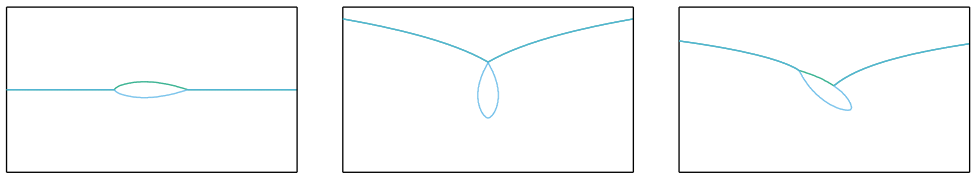

  \hfil%
  \input{onecornerright.pstex}%
  \input{onecornersame.pstex}%
  \input{onecornerleft.pstex}%
  \hfil%
  \caption{Incompressible flows around a one-corner profile:
    $\mu=-0.1$, $\nu\pi=315^\circ$, 
    $\alpha\in\{0^\circ,90^\circ,135^\circ\}$, $\Gam$ Kutta-Joukowsky value.
  }
  \label{fig:onecorner}%
\end{figure*}
\begin{figure*}[h]
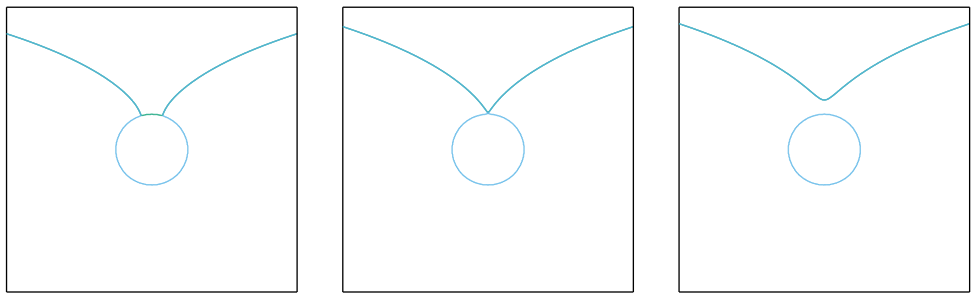

  \hfil%
  \input{zerocorners2.pstex}%
  \input{zerocorners1.pstex}%
  \input{zerocorners0.pstex}%
  \hfil%
  \caption{Incompressible flow around a unit circle for three values of the circulation:
    $\alpha=0$, 
    $\mu=0$, $\Gam\in\{12,12.57,12.8\}$. 
  }
  \label{fig:zerocorner}%
\end{figure*}
\clearpage
\begin{remark}
  Proposition \ref{prop:vertex} shows that a body streamline converging to a smooth body point meets the boundary perpendicularly (fig.\ \ref{fig:zerocorner} left), unless the other body streamline also converges to the same point,
  in which case both form an angle $60^\circ$ to the boundary and to each other (fig.\ \ref{fig:zerocorner} center).
\end{remark}

\begin{figure*}
  \input{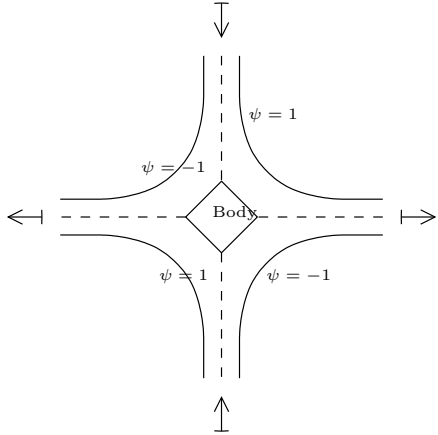}
  \caption{Flow around a four-protruding-corner body through four ``channels'' at infinity; solid lines indicate slip-condition walls, dashed lines the body streamline.}
  \label{fig:channel}
\end{figure*}

\begin{remark}
  Our nonexistence results for three or more protruding corners are somewhat ``topological'' in nature: 
  they use crucially that the flow at infinity generates only two ``body streamlines'', two being insufficient to accomodate more than two protruding corners. 

  If we alter the problem, e.g.\ by restricting infinity to four channels with walls that are straight and parallel (see fig.\ \ref{fig:channel}),
  then bodies with more protruding corners are certainly possible. 
  E.g.\ for incompressible flow we focus on the upper right quadrant (above and right of the dashed lines); we solve $\Lap\stf=0$ with $\stf=1$ on the curved boundary,
  $\stf=0$ on the straight diagonal side of the diamond (body) and on the upper and right dashed lines;
  corner angles $<\pi$ yield bounded $\nabla\stf$. 
  Then perform an odd reflection across the positive vertical axis and another odd reflection across the entire horizontal axis to complete the construction. 
  
  Similarly, if there are $n$ disjoint compact bodies rather than just one,
  then we have $n$ circulation-type free parameters to adjust to keep the velocity bounded at more protruding corners. 

  However, in many conceivable applications, for example numerical approximation of curved bodies by finely subdivided polygons,
  the number of protruding corners will easily exceed the number of fortunate symmetries, free parameters or ``infinities''. 
\end{remark}

\begin{remark}
  Our proof technique leads to a rather transparent proof;
  on the other hand it is currently limited to analytic equations of state and piecewise analytic boundaries,
  which seems to cover all approximations commonly used in numerics. 
  The detailed analysis of Bers \cite{bers-exi-uq-potf} may offer sufficient tools for obtaining non-existence results in cases of $\Cinf$ or lower regularity. 
\end{remark}

\section*{Acknowledgements}

This material is based upon work partially supported by Taiwan MOST grant 105-2115-M-001-007-MY3.

\end{document}